\documentclass{article}
\usepackage{amsmath,amsfonts,amsthm,mathrsfs,indentfirst}
\usepackage[left=3cm,right=3cm,top=2.5cm,bottom=2.5cm]{geometry}
\usepackage{graphicx,subfigure}
\usepackage{epstopdf}
\usepackage{authblk}
\usepackage[numbers,sort&compress]{natbib}
\usepackage{hyperref,mathrsfs}
\usepackage{colortbl}

\def\cor#1{{\color{red}#1}}

\newtheorem{thm}{Theorem}
\newtheorem{prop}{Proposition}
\newtheorem{lemma}{Lemma}

\newtheorem{defn}{Definition}
\newtheorem{remark}{Remark}

\def\d{\mathrm{d}}
\def\bP{\mathbb{P}}
\def\hbP{\widehat{\mathbb{P}}}
\def\Tau{\mathscr{T}}
\def\V{\mathscr{V}}
\def\E{\mathscr{E}}
\def\G{\mathscr{G}}
\def\pa{\partial}
\def\R{\mathbb{R}}
\def\hR{\widehat{R}}
\def\T{\mathscr{T}}
\def\V{\mathscr{V}}

\def\M{\mathscr{M}}

\def\B{\mathscr{B}}
\def\O{\Omega}
\def\gss{\sqrt{3/5}}
\def\balpha{\boldsymbol\alpha}
\def\bnu{\boldsymbol\nu}
\def\bxi{\boldsymbol\xi}
\def\hphi{{\widehat\phi}}

\def\NC{\mathscr{NC}^h}
\def\Span{\mathrm{Span}}
\renewcommand\hat{\widehat}

\allowdisplaybreaks
\begin{document}
\title{A new cubic nonconforming finite element on rectangles\thanks{This project is supported by
NNSFC (Nos. 61033012,19201004,11271060,61272371), ``the Fundamental
Research Funds for the Central Universities'', and NRF Nos. 2012-0000153.}}
\author[a]{Zhaoliang Meng\thanks{Corresponding author: mzhl@
dlut.edu.cn}}
\author[a,b]{Zhongxuan Luo}
\author[c]{Dongwoo Sheen}
\affil[a]{\it School of Mathematical Sciences,Dalian
University of Technology, Dalian, 116024, China}
\affil[b]{\it School of Software,Dalian
University of Technology, Dalian, 116620, China}
\affil[c]{\it Department of Mathematics \& Interdisciplinary Program in
Computational Sciences \& Technology, Seoul National University,
Seoul 151-747, Korea.}

\maketitle
\begin{abstract}
A new nonconforming rectangle element with cubic convergence for the energy norm is
introduced. The degrees of freedom (DOFs) are defined by the twelve values at the three
Gauss points on each of the four edges. Due to the existence of one linear relation among
the above DOFs, it turns out the DOFs are eleven. The nonconforming element consists of
$P_2\oplus \Span\{x^3y-xy^3\}$. We count the corresponding dimension for Dirichlet and
Neumann boundary value problems of second-order elliptic problems. We also present the
optimal error estimates in both broken energy and $L_2(\O)$ norms. Finally, numerical
examples match our theoretical results very well.
    \\[6pt]
    \textbf{Keywords:} nonconforming finite element; optimal error
    estimates; quadrilateral mesh
\end{abstract}

\section{Introduction}
It has been well known that the standard lowest order conforming elements can
produce numerical locking and checker-board solutions in the approximation of
solid and fluid mechanics problems: see for instance \cite{bs92b, bs92a, brezzi-fortin,
  ciar, gira-ravi} and the references therein. An efficient approach to
deal with this case is to employ the nonconforming element method,
which has made a great impact on the development of finite element methods \cite{cdssy, cdy99, cr73,
fortin-soulie-quad-nc-2d, fortin3d, farhoul-fortin,
rannacher-turek,arnold-winter-03, brenner-sung, fal91, lls-nc-elast,
luskin-98, luskin-96, ming-shi-01,
zhang-quad-97,2007-Hu-p88-102,2005-Yi-p115-133,2009-Awanou-p49-60}.

To approximate PDEs using a nonconforming element of order $k$, one needs to impose the
continuity of the moments up to order $k-1$ of the functions across all the
interfaces of neighboring elements. This condition is known as the patch test
\cite{1977-Irons-p557}. In two dimensions, the patch test is equivalent to the continuity
at the $k$ Gauss points located on each interface. This implies that a $P_k$-nonconforming
element, if exists, must be continuous at the $k$ Gauss points on each edge. These points
(completed with internal points for $k\geq 3$) can be used to define local Lagrange
degrees of freedom (DOFs) on the simplex if $k$ is odd, but this construction is not
possible if $k$ is even since there exists a lower-degree polynomial vanishing at all the
Gauss points \cite{fortin-soulie-quad-nc-2d}. Thus suitable bubble functions are often employed
to enrich the finite element space. Until now, the triangular nonconforming elements are
well studied in the literature (see, \cite{cr73,fortin-soulie-quad-nc-2d}), but the analysis
of their quadrilateral counterparts is less complete.

Even though the triangular or tetrahedral meshes are popular to use, in some cases where
the geometry of the problem has a quadrilateral nature, one wishes to use quadrilateral
or hexahedral meshes with proper elements. For even $k$, the same trouble exists, that
is, there also exists a lower-degree polynomial vanishing at all the Gauss points. Again,
some bubble functions are added to the finite element space \cite{hleesheen-p2rec}.
Compared to the triangular case, another trouble for quadrilateral finite element is that
the DOFs and corresponding polynomial space do not match. Usually the number of DOFs is
bigger than the dimension of $P_k$. For example, for $k=1$ and $k=2$, the numbers of DOFs
are $4$ and $8$, respectively, while the corresponding dimensions of $P_k$ are $3$ and
$6$, respectively. Therefore, some additional relations must be imposed or some special
functions are added to the finite element space such that unisolvency can be
satisfied, see \cite{2004-Park-p624, hleesheen-p2rec, kim}.

The purpose of this paper is to develop a $P_3$-nonconforming element on rectangular
meshes. We define the 12 Gauss points (3 Gauss points on every edge) as the DOFs. To
obtain an optimal order error estimate, the finite element space must be carefully
chosen such that any function in this space is a polynomial of degree no greater
than 3 on every edge. Meanwhile, we also notice that the values on the 12 Gauss points satisfy a linear
relation if the degree of a polynomial on every edge is no more 3, which is a little
different from the triangular mesh case. Thus we define our finite element space as
$P_3\oplus\Span\{x^3y-xy^3\}$. Therefore, the number of DOFs is locally 11. We prove
unisolvency and define three types of local and global bases, one of which is defined
associated with vertices and the other two of which are defined associated with edges.
Then we derive optimal error estimates for second-order elliptic problems in broken
energy- and $L^2$-norms. Finally, numerical examples are provided, which match
our theoretical result very well.

This paper is organized as follows. In Section 2 the $P_3$ nonconforming element is defined
on rectangular meshes. The dimensions and basis functions for Dirichlet and Neumann problems
are given. In section 3 interpolation operators are defined and optimal order error estimates
are shown. In Section 4, numerical results for the elliptic problems are presented.

\section{The $P_3$ nonconforming element on rectangular mesh}

\subsection{The $P_3$-Nonconforming Quadrilateral Elements}
Take reference element as square $\hR=[-1,1]^2$. Denote by
$V_j,j=1,\cdots,4 ,$ the vertices and by
$g_{3j-2},g_{3j-1}$ and $g_{3j}$ the Gauss points on the edges
$\overline{V_jV_{j+1}},j=1,\cdots,4,$ with the identification $V_1=V_5$ (see Fig.
\ref{fig:rec}):
\begin{eqnarray*}
g_1=\left(-\gss,-1\right),&g_2=(0,-1), &g_3=\left(\gss,-1\right),\\
g_4=\left(1,-\gss\right), &g_5=(1,0),&g_6=\left(1,\gss\right),\\
g_7=\left(\gss,1\right),&g_8=(0,1), &g_9=\left(-\gss,1\right),\\
g_{10}=\left(-1,\gss\right),&g_{11}=(-1,0), &g_{12}=\left(-1,\gss\right).
\end{eqnarray*}

\begin{figure}[h]
    \begin{center}
        \includegraphics{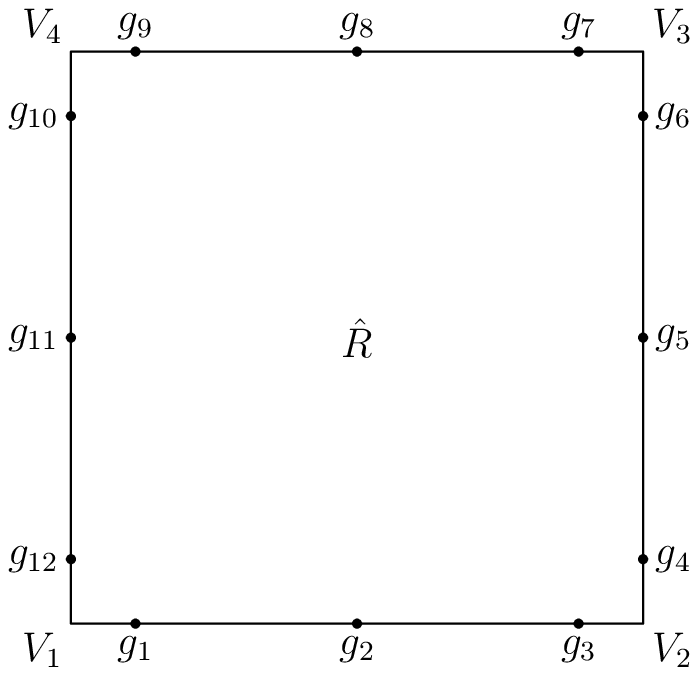}
    \end{center}
    \caption{The reference rectangle $\hR$}
    \label{fig:rec}
\end{figure}

Denoting by $P_k(\hR)$ the space of polynomials of degree $\leq k$ on
$\hR$, set
\begin{eqnarray*}
    \hbP=P_3(\hR)\oplus \Span\{x^3y-xy^3\}, \quad\text{with }\dim(\hbP) = 11.
\end{eqnarray*}
The space $\hbP$ will be our nonconforming finite element space on $\hR$ with
appropriate degrees of freedom that will be defined soon.
Before proceeding, we notice the following simple result.
\begin{lemma}
The following relationship holds:
    \begin{eqnarray}
        3p(-1)+3p(1)-5p(\gss)-5p(-\gss)+4p(0)=0 \quad\forall p \in
        P_3(\mathbb R).
        \label{eq:relation1}
    \end{eqnarray}
    \label{lem:relation1}
\end{lemma}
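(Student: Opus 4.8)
The plan is to use linearity. Set $L(p):=3p(-1)+3p(1)-5p(\gss)-5p(-\gss)+4p(0)$, which is a linear functional on the four-dimensional space $P_3(\mathbb R)=\Span\{1,x,x^2,x^3\}$. Hence it suffices to check that $L$ annihilates each of the four monomials. First I would observe that the five evaluation points $\{-1,1,\gss,-\gss,0\}$ are symmetric under $x\mapsto -x$, and the attached coefficients $\{3,3,-5,-5,4\}$ respect this symmetry; therefore $L(x)=L(x^3)=0$ automatically, and only the even monomials remain. For $p\equiv 1$ one gets $3+3-5-5+4=0$, and for $p(x)=x^2$ one gets $3+3-5\cdot\frac35-5\cdot\frac35+0=6-3-3=0$. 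This finishes the verification, so $L$ vanishes on all of $P_3(\mathbb R)$.

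Alternatively — and this is the more conceptual route I would mention — relation \eqref{eq:relation1} is exactly the assertion that two classical quadrature rules on $[-1,1]$ that are both exact on cubics must agree there. Simpson's rule gives $\int_{-1}^{1}p(x)\,\d x=\tfrac13\bigl(p(-1)+4p(0)+p(1)\bigr)$ for $p\in P_3$, while the three-point Gauss--Legendre rule gives $\int_{-1}^{1}p(x)\,\d x=\tfrac59\bigl(p(-\gss)+p(\gss)\bigr)+\tfrac89 p(0)$ for $p\in P_5\supset P_3$. Subtracting the two identities and clearing denominators by multiplying through by $9$ yields precisely $3p(-1)+3p(1)+4p(0)-5p(-\gss)-5p(\gss)=0$. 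Either argument is complete in a couple of lines; I would probably present the direct basis check for self-containedness and remark on the quadrature interpretation since it explains where the coefficients come from.

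I do not anticipate a genuine obstacle here: the statement is a finite-dimensional identity and the only "idea" involved is recognizing the even/odd symmetry (or the quadrature structure) so as to avoid writing out all four substitutions. The one place to be careful is bookkeeping with the value $(\gss)^2=3/5$ in the $x^2$ case, but that is routine.
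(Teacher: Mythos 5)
Your proof is correct, but it takes a genuinely different route from the paper. The paper observes that the fourth-order divided difference of a cubic at the five points $-1,-\gss,0,\gss,1$ vanishes, writes that divided difference in its Lagrange form $\sum_i p(x_i)/\prod_{j\neq i}(x_i-x_j)$, and obtains the coefficients $3,-5,4,-5,3$ (up to a common factor $5/12$) by computing the products; this viewpoint also explains abstractly why \emph{some} nontrivial vanishing linear combination at five points must exist for $p\in P_3$. You instead verify the fixed linear functional $L$ directly on the monomial basis of $P_3(\mathbb R)$, with the even/odd symmetry disposing of $x$ and $x^3$, and the two remaining checks ($3+3-5-5+4=0$ and $3+3-3-3=0$) are right. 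Your quadrature remark is also correct: subtracting the three-point Gauss--Legendre rule (exact on $P_5$) from Simpson's rule (exact on $P_3$) on $[-1,1]$ and multiplying by $9$ gives exactly \eqref{eq:relation1}, and this nicely explains the provenance of the coefficients given that the Gauss points serve as the degrees of freedom. Your argument is more elementary and self-contained than the paper's (no divided-difference machinery, no unstated ``simple computation''), while the paper's derivation produces the coefficients rather than presupposing them; either is a complete proof.
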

\begin{proof} Let $p\in P_3(\mathbb R).$ Then
    any fourth order difference quotient of $p(x)$ vanishes. The fourth order difference quotient of
    $p(x)$ at points $x_1=-1,x_2=-\gss,x_3=0,x_4=\gss$, and $x_5=1$ can be expressed by
    \begin{eqnarray*}
        \sum_{i=1}^5\frac{p(x_i)}{(x_i-x_1)\cdots
        (x_i-x_{i-1})(x_i-x_{i+1})\cdots(x_i-x_5)}.
    \end{eqnarray*}
    A simple computation derives the desired result.
\end{proof}

As an immediate consequence of Lemma \ref{lem:relation1}, we have the following proposition.
\begin{prop}\label{prop:relation}
    The following relationship holds:
    \begin{eqnarray}
        \label{eq:relation2}
        \begin{split}
        &4\big(\varphi(g_2)+\varphi(g_8)\big)-5\big(\varphi(g_1)+\varphi(g_3)+\varphi(g_7)+\varphi(g_9)\big)\\
        &\qquad=4\big(\varphi(g_5)+\varphi(g_{11})\big)-5\big(\varphi(g_4)+\varphi(g_6)+\varphi(g_{10})+\varphi(g_{12})\big)\quad
        \forall \varphi\in\hbP.
        \end{split}
    \end{eqnarray}
\end{prop}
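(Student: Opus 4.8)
The plan is to exploit linearity: both the left- and right-hand sides of \eqref{eq:relation2} are linear functionals of $\varphi$, so it suffices to verify the identity on each summand of the direct sum $\hbP=P_3(\hR)\oplus\Span\{x^3y-xy^3\}$ separately. The key observation is that, for a polynomial of degree $\le 3$, the two opposite pairs of edges each produce \emph{the same} quantity when Lemma~\ref{lem:relation1} is applied, namely a multiple of the sum of the four vertex values; and that the extra basis function $x^3y-xy^3$ simply does not see either side.

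First I would treat $\varphi\in P_3(\hR)$. The restriction of $\varphi$ to an edge of $\hR$ is a cubic in the edge variable, and its values at the five abscissae $-1,-\gss,0,\gss,1$ are precisely the values at the two endpoints (which are among $V_1,\dots,V_4$), at the central Gauss point, and at the two outer Gauss points of that edge. Applying Lemma~\ref{lem:relation1} to the bottom edge $y=-1$ and to the top edge $y=1$ and adding, the vertex contributions collect and one gets
\[
4\big(\varphi(g_2)+\varphi(g_8)\big)-5\big(\varphi(g_1)+\varphi(g_3)+\varphi(g_7)+\varphi(g_9)\big)=-3\sum_{j=1}^4\varphi(V_j).
\]
Applying the same lemma to the vertical edges $x=1$ and $x=-1$ and adding gives, in the same way,
\[
4\big(\varphi(g_5)+\varphi(g_{11})\big)-5\big(\varphi(g_4)+\varphi(g_6)+\varphi(g_{10})+\varphi(g_{12})\big)=-3\sum_{j=1}^4\varphi(V_j).
\]
Hence the two sides of \eqref{eq:relation2} agree for $\varphi\in P_3(\hR)$, both equalling $-3\sum_{j}\varphi(V_j)$.

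Next I would handle $\varphi=x^3y-xy^3=xy(x^2-y^2)$ by a symmetry argument. On each edge this function is, in the edge parameter $t$, of the form $\pm(t-t^3)$ (for instance $\varphi(x,-1)=x-x^3$, $\varphi(1,y)=y-y^3$), hence an odd function about the edge midpoint. Therefore it vanishes at the four central Gauss points $g_2,g_5,g_8,g_{11}$, and the values at the two outer Gauss points of each edge cancel in pairs, so $\varphi(g_1)+\varphi(g_3)=\varphi(g_7)+\varphi(g_9)=\varphi(g_4)+\varphi(g_6)=\varphi(g_{10})+\varphi(g_{12})=0$; thus both sides of \eqref{eq:relation2} vanish for this $\varphi$. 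Writing a general element of $\hbP$ as $\varphi=p+c(x^3y-xy^3)$ with $p\in P_3(\hR)$, each side of \eqref{eq:relation2} then equals $-3\sum_{j=1}^4 p(V_j)$, which finishes the proof. I do not expect a genuine obstacle here; the only care needed is the bookkeeping of the vertex labels and of the pairing of Gauss points on opposite edges (noting in particular that $g_{12}=(-1,-\gss)$, a misprint in the list above), so that the four invocations of Lemma~\ref{lem:relation1} line up correctly.
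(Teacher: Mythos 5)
Your proof is correct and follows essentially the same route as the paper: the paper's (very brief) proof just observes that every $\varphi\in\hbP$ restricts to a polynomial of degree $\le 3$ on each edge and applies Lemma~\ref{lem:relation1} to the two horizontal and the two vertical edges, so that both sides of \eqref{eq:relation2} reduce to $-3\sum_{j}\varphi(V_j)$, exactly as in your $P_3$ computation. Your separate symmetry argument for $x^3y-xy^3$ is fine but unnecessary, since that function is also cubic on every edge, so the edge-wise application of Lemma~\ref{lem:relation1} already covers all of $\hbP$ without the direct-sum splitting (and you are right that $g_{12}=(-1,-\gss)$ is a misprint in the paper's list of Gauss points).
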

\begin{proof}
    Notice that $\varphi$ is a polynomial of degree no greater than 3
    on any edge of $\hR$. The result follows immediately from Lemma \ref{lem:relation1}.
\end{proof}

\begin{lemma}
    Suppose that $\varphi\in\hbP$ vanishes at the twelve Gauss points
    $g_j,j=1,\cdots,12$. Then $\varphi=0$ in $\hR$.
    \label{lem:unisolvency}
\end{lemma}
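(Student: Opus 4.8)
The plan is to show that the only element of $\hbP$ vanishing at all twelve Gauss points is the zero polynomial, by exploiting the structure of $\hbP = P_3(\hR)\oplus\Span\{x^3y-xy^3\}$ together with the fact that the restriction of any $\varphi\in\hbP$ to each edge is a cubic in one variable. Write $\varphi = q + c(x^3y - xy^3)$ with $q\in P_3(\hR)$ and $c\in\mathbb R$. On the bottom edge $y=-1$ we have $\varphi(x,-1) = q(x,-1) - c(x^3 - x)$, a cubic in $x$ that vanishes at the three Gauss points $\pm\gss, 0$; on the top edge $y=1$, $\varphi(x,1) = q(x,1) + c(x^3-x)$ vanishes at the same three abscissae; similarly on the two vertical edges one gets cubics in $y$ vanishing at $\pm\gss,0$.

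First I would use the one-dimensional fact that a cubic in $t$ vanishing at $t=-\gss,0,\gss$ must be a scalar multiple of $t^3 - \tfrac35 t$ (equivalently, of $5t^3-3t$), since those three points are exactly the roots of that cubic. Applying this on the four edges gives four relations: $q(x,-1) - c(x^3-x) = a_1(5x^3-3x)$, $q(x,1)+c(x^3-x) = a_2(5x^3-3x)$, and analogous ones in $y$ on the vertical edges with constants $a_3,a_4$. These pin down the edge traces of $q$ up to the four scalars $a_1,\dots,a_4$ and the parameter $c$.

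Next I would extract consequences for the coefficients of $q$ itself. Writing $q$ in the monomial basis of $P_3(\hR)$ (ten coefficients), the four edge conditions impose a linear system; the key observations are that the pure cubic terms $x^3$ and $y^3$ in $q$ and the mixed terms $x^2y, xy^2$ are constrained, and that matching the \emph{low-order} part (degrees $0,1,2$) of each edge trace to the cubic-only right-hand side forces those parts to vanish. Concretely, on $y=\pm1$ the polynomial $q(x,\pm1)$ has its $P_2$-in-$x$ part equal to zero, and on $x=\pm1$ the $P_2$-in-$y$ part of $q(\pm1,y)$ is zero; from these one deduces successively that the constant, linear, and quadratic coefficients of $q$ all vanish, leaving $q\in\Span\{x^3,x^3y? \dots\}$ — more precisely $q$ reduces to a combination of $x^3, y^3, x^2y, xy^2, xy, \dots$ whose surviving coefficients are then tied to $a_1,\dots,a_4,c$. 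Carrying the bookkeeping through, the top/bottom pair forces $a_1=-a_2$ tied to $c$, the left/right pair forces $a_3=-a_4$ tied to $c$, and a final compatibility (for instance evaluating the remaining freedom against Proposition~\ref{prop:relation}, or simply comparing the coefficient of $x^3y$ which can only come from the $x^3y-xy^3$ term) forces $c=0$, whence all $a_i=0$ and $q=0$.

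The main obstacle I expect is the linear-algebra bookkeeping: $\hbP$ is $11$-dimensional while there are $12$ Gauss-point functionals, so the twelve evaluation conditions are \emph{not} independent — Proposition~\ref{prop:relation} is exactly the one linear relation among them. Thus one cannot argue by a naive rank count; instead the proof must genuinely use the special shape of $\hbP$ (in particular that $x^3y-xy^3$ is the \emph{only} degree-$4$ direction allowed, and that it restricts to $\pm(x^3-x)$ or $\pm(y^3-y)$ on the edges) to close the argument. The delicate point is ensuring that after imposing the eleven effectively-independent conditions nothing is left over: I would double-check this by confirming that the remaining homogeneous system in $(a_1,a_2,a_3,a_4,c)$ together with the reduced coefficients of $q$ has only the trivial solution, which is the computation the authors presumably carry out explicitly.
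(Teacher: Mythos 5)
Your overall strategy---write $\varphi=q+c(x^3y-xy^3)$ with $q\in P_3$, observe that each edge trace is a univariate cubic vanishing at the three Gauss points and hence a multiple of the Legendre cubic $5t^3-3t$, then eliminate the eleven unknowns---is viable and genuinely different from the paper's argument. The paper instead exploits $\varphi_1(x,y)=x^2+y^2-8/5$, which vanishes at the eight non-midpoint Gauss points $(\pm\gss,\pm1)$, $(\pm1,\pm\gss)$: it divides $\varphi=\varphi_1(a_0xy+a_1x+a_2y+a_3)+xr_1(y)+r_2(y)$, shows the cubics $r_1,r_2$ vanish at the four points $\pm1,\pm\gss$ and hence are zero (forcing $a_0=0$), and finishes with the four midpoint conditions applied to $\varphi_1(a_1x+a_2y+a_3)$. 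Your route does close if carried out: each edge gives three conditions (constant and quadratic coefficients of the trace vanish, and $3\,(\text{cubic coeff})+5\,(\text{linear coeff})=0$), from which $q_{20}=q_{21}=q_{02}=q_{12}=0$, then $q_{00}=q_{10}=q_{30}=q_{01}=q_{03}=0$, while the top/bottom pair yields $q_{11}=2c/5$ and the left/right pair yields $q_{11}=-2c/5$, so $c=q_{11}=0$.

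However, as written the proposal has a genuine gap: the decisive elimination is deferred (``the computation the authors presumably carry out explicitly''), and the two concrete claims you do make about how it closes are incorrect. First, it is not true that the $P_2$-part of $q(x,\pm1)$ (or of $q(\pm1,y)$) vanishes: a multiple of $5x^3-3x$ has a nonzero linear term, so only the constant and quadratic coefficients of each trace are forced to zero, while the linear coefficient is tied to the cubic one and receives a contribution $\mp c$ from $x^3y-xy^3$. Second, neither of your suggested mechanisms for deducing $c=0$ works: Proposition \ref{prop:relation} holds identically for every $\varphi\in\hbP$, so once all twelve Gauss values are zero it imposes no further condition, and there is no functional in the data that reads off ``the coefficient of $x^3y$'' directly. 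The step that actually kills $c$ is the cross-compatibility between the horizontal and vertical edge pairs (the $xy$-coefficient of $q$ is determined with opposite signs by the two families). So the skeleton is sound and even leads to a correct alternative proof, but the argument is incomplete until this bookkeeping is actually done, and the specific hints you give for its crucial step would lead a reader astray.
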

\begin{proof}
Let $p\in P_3(\mathbb R)$ vanish at vanishes at the twelve Gauss points
$g_j,j=1,\cdots,12$.
    It is easy to check that $\varphi_1(x,y)=x^2+y^2-8/5$ vanishes at the
    eight Gauss
    points $g_{3j-2},g_{3j},j=1,2,3,4$. By a simple polynomial division,
    $\varphi(x,y)$ can be written as
    \begin{eqnarray*}
        \varphi(x,y)=\varphi_1(x,y)(a_0xy+a_1x+a_2y+a_3)+xr_1(y)+r_2(y)
    \end{eqnarray*}
    where $r_1(y)$ and $r_2(y)$ take the following form:
    \begin{eqnarray*}
        r_1(y)=\cor{-}2a_0y^3+b_1y^2+b_2y+b_3 \quad\text{and }\,
        r_2(y)=c_0y^3+c_1y^2+c_2y+c_3.
    \end{eqnarray*}
 All the coefficients $a_j,b_j,c_j$ are yet to be determined. It
    follows from $\varphi(\pm\gss,1)=\varphi_1(\pm\gss,1)=0$ that
    \begin{eqnarray*}
        \gss r_1(1)+r_2(1)=0,\quad \text{and}\quad
        -\gss r_1(1)+r_2(1)=0,
    \end{eqnarray*}
which implies that $r_1(1)=r_2(1)=0$. Similarly, if follows from $\varphi(\pm\gss,-1)=\varphi_1(\pm\gss,-1)=0$ that
    $r_1(-1)=r_2(-1)=0.$
Next, it follows from $\varphi(\pm 1,\gss)=\varphi_1(\pm 1,\gss)=0$ that
$r_1(\pm\gss)=r_2(\pm\gss)=0.$
%
%
Since
$r_1$ and $r_2$ are polynomials of degree no
    more 3, which leads to $r_1(y)=r_2(y)=0$. Hence $\varphi(x,y)$ must have the
    following form
    \begin{eqnarray*}
        \varphi(x,y)=\varphi_1(x,y)(a_1x+a_2y+a_3).
    \end{eqnarray*}
Then the four additional conditions $\varphi(g_{3j-1})=0,j=1,2,3,4,$ lead to
    $\varphi(x,y)=0$, which completes the proof.
\end{proof}

Due to proposition \ref{prop:relation} and Lemma \ref{lem:unisolvency}, we
have the unisolvency result.
\begin{prop}\label{prop:unisolvency}
    A function $\varphi\in\hbP$ is uniquely determined by
    $\varphi(g_j),j=1,2,\cdots,12,$ which satisfy the relation
    \eqref{eq:relation2}.
\end{prop}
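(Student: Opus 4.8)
The plan is to read unisolvency as the statement that a single evaluation map is a linear isomorphism. Define $L\colon\hbP\to\mathbb R^{12}$ by $L\varphi=\big(\varphi(g_1),\dots,\varphi(g_{12})\big)$, and let
\[
H=\Big\{v\in\mathbb R^{12}\ :\ 4(v_2+v_8)-5(v_1+v_3+v_7+v_9)=4(v_5+v_{11})-5(v_4+v_6+v_{10}+v_{12})\Big\}
\]
be the hyperplane cut out by \eqref{eq:relation2}. Proposition \ref{prop:unisolvency} is precisely the assertion that $L$ restricts to a bijection $\hbP\to H$: prescribing a DOF-tuple that satisfies \eqref{eq:relation2} means prescribing an element of $H$, and recovering $\varphi$ from it uniquely means that $L$ is one-to-one and onto $H$.

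First I would note that $H$ has dimension exactly $11$, since the linear functional defining it has nonzero coefficients and hence does not vanish identically on $\mathbb R^{12}$. Proposition \ref{prop:relation} then gives $L(\hbP)\subseteq H$, so $L$ may be viewed as a map $\hbP\to H$; Lemma \ref{lem:unisolvency} gives $\ker L=\{0\}$, i.e.\ $L$ is injective. Since $\dim\hbP=11$, the rank-nullity theorem yields $\dim L(\hbP)=11=\dim H$, so $L(\hbP)=H$ and $L\colon\hbP\to H$ is surjective as well. An injective linear map between finite-dimensional spaces of equal dimension is an isomorphism, which is exactly the claimed unisolvency (both existence and uniqueness of $\varphi$).

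There is no genuine obstacle here: the proposition is a formal consequence of the count $\dim\hbP=11$ together with the relation of Proposition \ref{prop:relation} (which supplies the single constraint) and Lemma \ref{lem:unisolvency} (which supplies injectivity). The only step worth a word of care is checking that \eqref{eq:relation2} is a non-degenerate constraint, so that $H$ is $11$-dimensional rather than all of $\mathbb R^{12}$; this is immediate from inspecting the coefficients. Alternatively one could fix an explicit $11$-element basis of $\hbP$ and verify that the associated $11\times 12$ evaluation matrix has full rank $11$, but the dimension argument above sidesteps any computation.
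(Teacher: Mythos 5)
Your proof is correct and takes essentially the same route as the paper, which deduces Proposition \ref{prop:unisolvency} directly from Proposition \ref{prop:relation} and Lemma \ref{lem:unisolvency} without further detail. Your formalization via the evaluation map $L\colon\hbP\to\mathbb R^{12}$, the $11$-dimensional constraint hyperplane $H$, injectivity from Lemma \ref{lem:unisolvency}, and the rank--nullity count $\dim\hbP=11=\dim H$ simply makes explicit (including surjectivity onto $H$) the argument the paper leaves implicit.
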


Denote by $M_j,j=1,2,3,4,$ the
four midpoints of four edges. Obviously, $M_j$ is one of three Gauss points on
$j$th edge and hence the other two Gauss points on $j$th edge can also be denoted
by $M_j^+$ and $M_j^-$. For example, in Fig.
\ref{fig:rec}, $M_1^+=g_1$ and $M_1^-=g_3$.  For $1\leq j\leq 4$, define $\varphi_j^V$,$\varphi_j^{E_+}$ and
$\varphi_j^{E_-}\in \hbP$ (see Fig. \ref{fig:basis}) by
\begin{eqnarray*}
    \varphi_j^V(g_k)=\begin{cases}
        1, & k=3j-3,3j-2,\\
        0, & otherwise,
    \end{cases}
\end{eqnarray*}
(with the identification $g_0=g_{12}$), and
\begin{eqnarray*}
    \varphi_j^{E_+}(g_k)=\begin{cases}
        4, & g_k=M_j^+,\\
        5, & g_k=M_j,\\
        0, & otherwise,
    \end{cases}
\end{eqnarray*}
and
\begin{eqnarray*}
    \varphi_j^{E_-}(g_k)=\begin{cases}
        5, & g_k=M_j,\\
        4, & g_k=M_j^-,\\
        0, & otherwise.
    \end{cases}
\end{eqnarray*}

\begin{figure}[htpb]
    \centering
    \subfigure[]{
        \begin{minipage}[t]{.5\textwidth}
       \centering
       \includegraphics[width=0.6\textwidth]{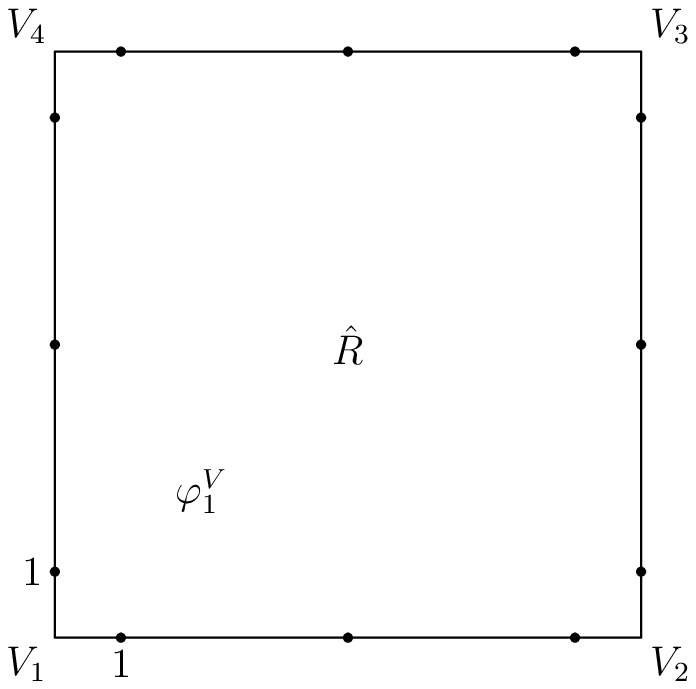}
       \end{minipage}}\\
        \subfigure[]{
        \begin{minipage}[t]{.5\textwidth}
       \centering
       \includegraphics[width=0.6\textwidth]{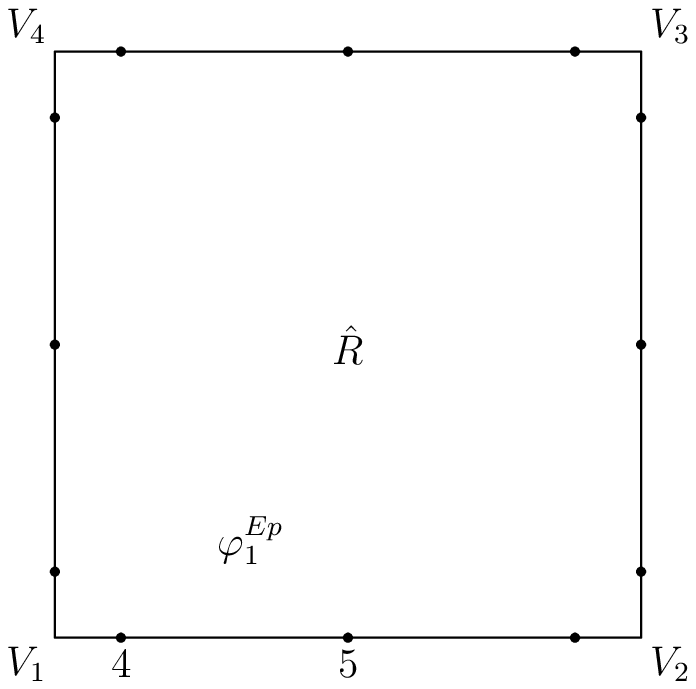}
       \end{minipage}}%
        \subfigure[]{
        \begin{minipage}[t]{.5\textwidth}
       \centering
       \includegraphics[width=0.6\textwidth]{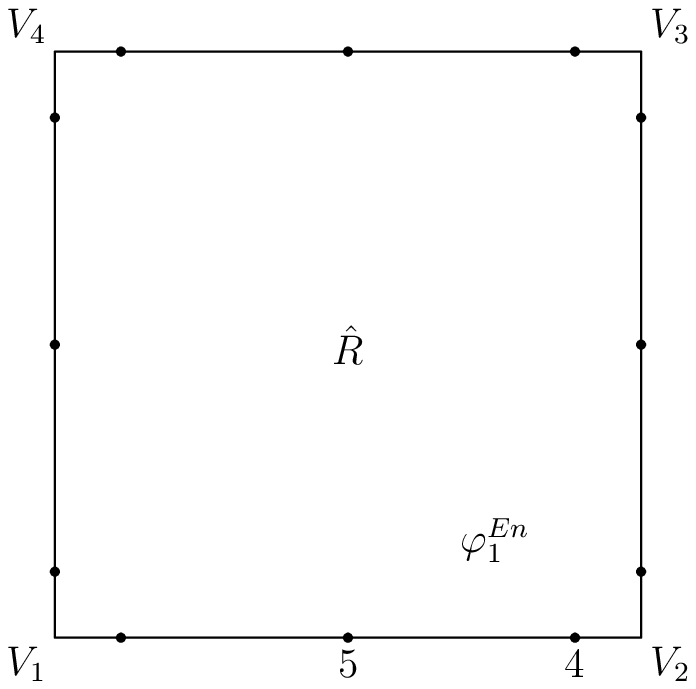}
       \end{minipage}}%
       \caption{Three types of basis function. (a) The vertex-based basis function associated with vertex $V_1$.
       (b) The fist type of the edge-based function associated with edge
       $E_1$. (c) The second type of the edge-based function associated with edge
       $E_1$.}
       \label{fig:basis}
\end{figure}

We then have the following result.
\begin{lemma}
    $\Span\{\varphi_j^V,\varphi_j^{E_+},\varphi_j^{E_-},j=1,2,3,4\}=\hbP$.
    Indeed, any eleven of
    $\varphi_j^V,\varphi_j^{E_+},\varphi_j^{E_-},j=1,2,3,4,$ span
    $\hbP$.
\end{lemma}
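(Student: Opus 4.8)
The plan is to exhibit twelve functions in $\hbP$ — namely the $\varphi_j^V,\varphi_j^{E_+},\varphi_j^{E_-}$ for $j=1,2,3,4$ — count them, and show that any eleven of them are linearly independent; since $\dim(\hbP)=11$, linear independence of eleven of them will already force them to span $\hbP$, and the claim about all twelve spanning $\hbP$ follows a fortiori. First I would record that each of the twelve prescribed functions is well defined by Proposition~\ref{prop:unisolvency}: one must only check that the vector of prescribed Gauss-point values satisfies the single linear relation \eqref{eq:relation2}. For $\varphi_j^V$ the nonzero values are a pair $(1,1)$ at two adjacent Gauss points straddling a vertex, one being an endpoint-type point (a $g_{3k-2}$ or $g_{3k}$) on one edge and the other the analogous point on the neighbouring edge; plugging into \eqref{eq:relation2} the two sides each pick up a contribution, and the coefficients $-5$ on each side match, so the relation holds. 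For $\varphi_j^{E_+}$ and $\varphi_j^{E_-}$ the values $4$ at an endpoint-type point and $5$ at the midpoint $M_j$ lie on a single edge; on that edge's side of \eqref{eq:relation2} one gets $4\cdot 5 - 5\cdot 4 = 0$ (or the symmetric combination), and the other side is $0$, so again the relation is satisfied. Hence all twelve functions genuinely belong to $\hbP$.

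Next I would set up the linear-dependence bookkeeping. Suppose $\sum_{j} \big(\alpha_j\varphi_j^V+\beta_j\varphi_j^{E_+}+\gamma_j\varphi_j^{E_-}\big)=0$ in $\hbP$; evaluating at each of the twelve Gauss points gives twelve scalar equations. Because each basis candidate is supported (among the Gauss points) on at most two points, this $12\times 12$ homogeneous system is sparse and essentially cyclic: the value at a midpoint $g_{3j-1}$ involves only $\beta_j$ and $\gamma_j$, giving $5\beta_j+5\gamma_j=0$, while the value at an endpoint-type point $g_{3j}=M_j^-$ (respectively $g_{3j-2}=M_j^+$) involves one $\alpha$ (from the adjacent vertex function) and one $\beta$ or $\gamma$, e.g. $\alpha_{j+1}+4\gamma_j=0$ and $\alpha_j+4\beta_j=0$. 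Solving the midpoint equations gives $\gamma_j=-\beta_j$, and then the endpoint equations give $\alpha_{j+1}=-4\gamma_j=4\beta_j$ and $\alpha_j=-4\beta_j$; comparing $\alpha_{j+1}=4\beta_j$ with $\alpha_{j+1}=-4\beta_{j+1}$ yields $\beta_{j+1}=-\beta_j$, so going once around the four edges forces $\beta_1=-\beta_2=\beta_3=-\beta_4=-\beta_1$, whence all $\beta_j=0$, hence all $\gamma_j=0$ and all $\alpha_j=0$. Thus the twelve functions satisfy exactly a one-dimensional space of linear relations (consistent with their span being $11$-dimensional inside $\hbP$), and moreover \emph{any eleven of them} are linearly independent: deleting one coefficient from the cyclic argument above breaks the loop, so the remaining homogeneous system has only the trivial solution.

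Finally, since any eleven of the twelve functions are linearly independent elements of the $11$-dimensional space $\hbP$, they form a basis of $\hbP$; in particular they span $\hbP$, and therefore so does the full collection of twelve. The main obstacle here is not conceptual but purely one of careful indexing: one must keep straight, for each edge $E_j$, which Gauss point is $M_j^+$, which is $M_j$, which is $M_j^-$, and which vertex functions $\varphi_j^V$ are nonzero there (the identification $g_0=g_{12}$ and the cyclic labelling $V_1=V_5$ must be tracked consistently), so that the cyclic cancellation $\beta_{j+1}=-\beta_j$ around the four edges comes out with the correct sign and genuinely closes up to force $\beta_j=0$. Once the incidence pattern is written down correctly, the rest is immediate from $\dim(\hbP)=11$.
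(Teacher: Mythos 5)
Your overall strategy (evaluate a vanishing linear combination at the twelve Gauss points and analyse the resulting cyclic system, then use $\dim\hbP=11$) is the natural one — the paper itself omits the proof, deferring to Lemma 2.3 of the Lee--Sheen reference — but the central computation contains a sign/parity error that makes the proposal internally inconsistent. From $\gamma_j=-\beta_j$, $\alpha_j=-4\beta_j$, $\alpha_{j+1}=4\beta_j$ you correctly get $\beta_{j+1}=-\beta_j$; but closing this recursion around the \emph{four} edges gives $\beta_1=-\beta_2=\beta_3=-\beta_4$ and then $\beta_5=-\beta_4=+\beta_1$, which is consistent, not contradictory (you wrote $-\beta_4=-\beta_1$). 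Because the cycle has even length, the alternating pattern survives, so the homogeneous system has a one-dimensional kernel and the twelve functions are \emph{not} linearly independent — as they cannot be, being twelve vectors in the $11$-dimensional space $\hbP$; your conclusion ``whence all $\beta_j=0$'' contradicts both this dimension count and your very next sentence asserting a one-dimensional space of relations. The actual relation is, up to scale, $\sum_{j=1}^4(-1)^j\bigl(4\varphi_j^V-\varphi_j^{E_+}+\varphi_j^{E_-}\bigr)=0$: the left-hand side vanishes at all twelve Gauss points, hence is identically zero by Lemma \ref{lem:unisolvency} (this invocation of unisolvency is needed to pass from ``zero at the Gauss points'' to ``zero in $\hbP$'' and is missing from your write-up).

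The good news is that the piece you relegated to a parenthetical remark is the correct core of the proof and does rescue the statement: the recursion shows every solution of the evaluation system is determined by $\beta_1$ alone, so the space of linear relations is exactly one-dimensional and is spanned by the relation above, in which \emph{every} coefficient is nonzero. Hence any relation involving only eleven of the functions must be the trivial one, so any eleven are linearly independent and, by $\dim\hbP=11$, form a basis; in particular they span $\hbP$, and a fortiori so do all twelve. Your preliminary verification that each prescribed value vector satisfies \eqref{eq:relation2}, so that the twelve functions are well defined via Proposition \ref{prop:unisolvency}, is correct and worth keeping. As written, though, the proof needs the middle step repaired: replace the false claim that the full $12\times 12$ system has only the trivial solution by the identification of its one-dimensional kernel and the explicit relation, and only then argue independence of any eleven.
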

The proof of this lemma is similar to that of Lemma 2.3 in
\cite{hleesheen-p2rec}, which will be omitted.

We are now in a position to state the definition of $P_3$-nonconforming element
on a rectangle as follows.

\begin{defn}
    The $P_3$-nonconforming element on rectangle is defined by
    $(\hR,\hbP_{\hR},\hat{\sum}_{\hR})$, where
    \begin{itemize}
    \item
        $\hR$ is a rectangle,
    \item
        $\hbP_{\hR}=\hbP$ is the finite element space, and
    \item
        $\hat{\sum}_{\hR}=\{\varphi(g_j),j=1,2,\cdots,12,\ \text{such
        that Eq. \eqref{eq:relation2} holds for all}\ \varphi\in
        \hbP_{\hR}$\} is the degrees of freedom.
    \end{itemize}
\end{defn}
The following patch test lemma is immediate since any $p\in\hbP$ is of degree
$\le 3$ on an edge.
\begin{lemma}\label{lem:orth}
    Let $E_j$ denote the face containing the midpoint $M_j$ and the other
    two Gauss points $M_j^+$ and $M_j^-.$
    If
    $p\in\hbP,$  $p(M_j)=p(M_j^+)=p(M_j^-)=0$, then
    \begin{eqnarray}\label{eq:orth_property1}
    \int_{E_j}p(x,y)q(x,y)\,\d \sigma=0
    \end{eqnarray}
for all $q\in Q_2(E_j).$
\end{lemma}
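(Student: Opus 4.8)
The plan is to recognize \eqref{eq:orth_property1} as nothing more than the exactness of the three-point Gauss--Legendre quadrature rule on each edge. First I would fix an edge $E_j$ and let $\gamma:[-1,1]\to E_j$ be the affine parametrization for which the midpoint $M_j$ is the image of the node $t=0$ and the two remaining Gauss points $M_j^{\pm}$ are the images of the nodes $t=\pm\gss$; under this change of variables $\d\sigma=c\,\d t$ with $c>0$ a constant (on $\hR$ itself one simply lets $\gamma$ run along the edge, with $c=1$). As observed just before the statement, every $\varphi\in\hbP$ restricts to a polynomial of degree at most $3$ on any edge — the only nontrivial point being that the enriching function $x^3y-xy^3$ becomes $\pm(x^3-x)$ on the edges $y=\pm1$ and $\pm(y^3-y)$ on the edges $x=\pm1$, hence is cubic there. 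Thus $p\circ\gamma\in P_3([-1,1])$, and $q\circ\gamma\in P_2([-1,1])$ for any $q\in Q_2(E_j)$, since on a segment $Q_2$ coincides with $P_2$.

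Consequently $(p\,q)\circ\gamma$ is a univariate polynomial of degree at most $3+2=5$. The key step is then to invoke that the three-point Gauss--Legendre rule
\begin{eqnarray*}
\int_{-1}^1 f(t)\,\d t=\tfrac59\, f(-\gss)+\tfrac89\, f(0)+\tfrac59\, f(\gss)
\end{eqnarray*}
is exact for all $f\in P_5([-1,1])$. Applying this identity to $f=(p\,q)\circ\gamma$ yields
\begin{eqnarray*}
\int_{E_j}p\,q\,\d\sigma &=& c\int_{-1}^1(p\circ\gamma)(t)\,(q\circ\gamma)(t)\,\d t\\
&=& c\Big(\tfrac59\,p(M_j^+)q(M_j^+)+\tfrac89\,p(M_j)q(M_j)+\tfrac59\,p(M_j^-)q(M_j^-)\Big),
\end{eqnarray*}
and every term on the right vanishes because $p(M_j)=p(M_j^+)=p(M_j^-)=0$ by hypothesis. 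This is precisely \eqref{eq:orth_property1}.

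There is no serious obstacle here: the statement just packages Gauss-quadrature exactness. The single point that deserves care is the degree count — one must make sure that the cubic bound for $p|_{E_j}$ really survives the enrichment by $x^3y-xy^3$ (handled above) and that the product $p\,q$ does not exceed degree $5$ along the edge, so that the quadrature rule applies verbatim and no remainder term is left behind. If one would rather avoid introducing $\gamma$, the same three-line computation can be carried out directly on each of the four edges $y=\pm1$, $x=\pm1$ of $\hR$, at the cost of repeating it four times.
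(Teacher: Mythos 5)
Your proof is correct and is essentially the paper's own argument: the authors state the lemma as "immediate" precisely because every element of $\hbP$ restricts to a polynomial of degree $\le 3$ on an edge, so the product with a (quadratic-on-the-edge) $q\in Q_2(E_j)$ has degree $\le 5$ and the three-point Gauss rule, whose nodes are exactly $M_j^-,M_j,M_j^+$ where $p$ vanishes, integrates it exactly. You have simply written out the short quadrature computation the paper leaves implicit; there is nothing to add or correct.
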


\begin{remark}
    For actual computation, the local finite element can be alternatively
    given by
    \{$\sum_{j=1}^4\big(c_j^V\varphi_j^V+c_j^{E_+}\varphi_j^{E_+}+c_j^{E_-}\varphi_j^{E_-}$\big)\}.
\end{remark}
\begin{remark}
    In theory, if $x^3y-xy^3$ is replaced by $x^3y$, $xy^3$ or $x^3y+xy^3$, the
    unisolvency like Proposition \ref{prop:unisolvency} holds. But the
    former two choices are lack in symmetry. As for the third choice $x^3y+xy^3$,
    it turns out to be numerically during the computation of the bases since the
    corresponding coefficient matrices are nearly
    ill-conditioned (the determinants are near to zero).
\end{remark}

Let us proceed to define our $P_3$-nonconforming element space. Assume that
$\O\in \R^2$ is a parallelogram domain with boundary
$\Gamma$. Let $(\Tau_h)_{h>0}$ be a regular family of triangulation of $\O$ into
parallelograms $R_j,j=1,2,\cdots,N_R$, where $h=\max_{R\in \Tau_h}h_R$ with
$h_R=\text{diam}(R)$. For each $R\in\Tau_h$, let $F_R: \hR\rightarrow
\mathbb R^2$ be an invertible affine mapping such that
\[
R = F_R(\hR),
\]
and denote $\hphi_{R} = \hphi\circ F_R^{-1}: R \rightarrow \mathbb R$ for
all $\hphi \in \hbP,$ whose collection will be designated by
\[
\bP_R = \Span\{\phi_{R},\,\hphi \in \hbP \}.
\]

For a given triangulation $\Tau_h$ of $\O$, let $N_V,N_E,N_R$, and $N_G$
denote the numbers of vertices, edges, rectangles, and Gauss points,
respectively. Then set
\begin{eqnarray*}
    && \V_h=\{V_1,V_2,\cdots,V_{N_V}\}: \ \text{the set of all vertices
    of}\ R\in\T_h,\\
    && \E_h=\{E_1,E_2,\cdots,E_{N_E}\}: \ \text{the set of all edges of}\ R\in\T_h,\\
    && \G_h=\{g_1,g_2,\cdots,g_{N_G}\}:\ \text{the set of all Gauss points
    on}\ \E_h\in\T_h\\
    && \M_h=\{M_i\in E_i,i=1,2,\cdots,N_E\}:\ \text{the set of all Midpoints on}\ \E_h\in\T_h.
\end{eqnarray*}
In particular, let $N^i_V, N^i_E$, and $N^i_G$ denote the numbers of interior vertices, edges, and Gauss
points of $R\in \T_h$, respectively.

For a function $f$ defined in $\O$, denote by $f_j$ its restriction to $R_j$,
and $E_{jk}$ the interface between $R_j$ and $R_k$. Similarly,
$g_{jk},k=1,2,3,$ will mean the Gauss points on $\Gamma_j=\partial R_j\cap
\partial \O$ and $g_{jkl}, l = 1, 2,3,$ will be the Gauss points on $E_{jk}$.
We are now in a position to define the following nonconforming
finite element spaces.
\begin{eqnarray*}
    &&\NC=\{\varphi:\O\rightarrow \R\big|\varphi|_R\in \bP_R,\forall
    R\in\T_h, \varphi\ \text{is continuous at the Gauss points } g\in\G_h\},\\
    &&\NC_0=\{\varphi\in\NC\big|\varphi(g)=0, \text{for all Gauss points }\ g\in
    \G_h\cap \Gamma\}.
\end{eqnarray*}

For each vertex $V_j\in\V_h$, denote by $\E_h(j)$ and $\G_h(j)$ the set of all
edges $E\in \E_h$ with one of the endpoints being $V_j$ and the set of Gauss points
nearest to $V_j$ among the three Gauss points on $E$ for all
$E\in\E_h(j).$ For $M_j\in E_j$, if $g_i$ and $g_k$ are two
other Gauss points and $i<k$, we also denote these two Gauss points by $M_j^+$
and $M_j^-$, respectively.
We then define the three types of functions in $\NC$, which serve as global
bases for the nonconforming finite element spaces.

\begin{defn}\label{def:basis}
    The first type of functions are associated with vertices. Define
    $\varphi_j^V\in\NC,j=1,2,\cdots,N_V$, by
    \begin{eqnarray*}
        \varphi_j^V=\begin{cases}
            1, & g_k\in \G_h(j)\\
            0, & \text{otherwise}
        \end{cases}
    \end{eqnarray*}
    Next define the second type of functions associated with edges
    $E_j\in\E:$ define $\varphi_j^{E_+}\in\NC,j=1,2,\cdots,N_E$, by
    \begin{eqnarray*}
        \varphi_j^{E_+}=\begin{cases}
            5, & g_k=M_j\\
            4, & g_k=M_j^+\\
            0, & \text{otherwise}
        \end{cases}
    \end{eqnarray*}
    The last type of functions are also associated with edges
    $E_j\in\E:$ define $\varphi_j^{E_-}\in\NC,j=1,2,\cdots,N_E$, by
    \begin{eqnarray*}
        \varphi_j^{E_-}=\begin{cases}
            5, & g_k=M_j\\
            4, & g_k=M_j^-\\
            0, & \text{otherwise}
        \end{cases}
    \end{eqnarray*}
\end{defn}
Similarly, define the three types of functions which will serve as global
basis functions for $\NC_0$ with those for $\NC$ excluding $\varphi_j^V$'s
which are associated with boundary vertices and
$\varphi_j^{E_+},\varphi_j^{E_-}$'s which
are associated with boundary edges.

Now let us present the dimensions for the nonconforming finite element spaces.

\begin{thm}
    $\dim(\NC)=N_V+2N_E-1$. Let $\varphi_j^V,j=1,2,\cdots,N_V$ and
    $\varphi_j^{E_+},\varphi_j^{E_-},j=1,2,\cdots,N_E$ be the functions
    defined in Definition \ref{def:basis}. By omitting any one of
    these functions, each of the following three sets forms
 global basis functions for $\NC$:
    \begin{align}
        &\B_1=\{\varphi_1^V,\varphi_2^V,\cdots,\varphi_{N_V-1}^V,\varphi_j^{E_+},\varphi_j^{E_-},j=1,2,\cdots,N_E\},\\
        &\B_2=\{\varphi_j^V,j=1,2,\cdots,N_V,\varphi_1^{E_+},\varphi_2^{E_+},\cdots,\varphi_{N_E-1}^{E_+},\varphi_i^{E_-},i=1,2,\cdots,N_E\},\\
\intertext{and}
        &\B_3=\{\varphi_j^V,j=1,2,\cdots,N_V,\varphi_i^{E_+},i=1,2,\cdots
        N_E,\varphi_1^{E_-},\varphi_2^{E_-},\cdots,\varphi_{N_E-1}^{E_-}\}.
    \end{align}
    \label{thm:dim}
\end{thm}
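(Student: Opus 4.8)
The plan is to establish the dimension count first and then verify the spanning/linear-independence claim for the three proposed bases. For the dimension, I would proceed by a counting argument on the degrees of freedom. Each $\varphi\in\NC$ is determined by its values at the Gauss points $g\in\G_h$; there are $N_G = 3N_E$ of them, and the continuity requirement across interfaces is already built into $\NC$. The only remaining constraint comes from Proposition \ref{prop:relation}: on every rectangle $R$, the twelve local Gauss-point values satisfy one linear relation \eqref{eq:relation2}. So naively $\dim(\NC) = 3N_E - (\text{number of independent local relations})$. The key subtlety is that the $N_R$ local relations are \emph{not} all independent: summing the relation \eqref{eq:relation2} over all rectangles, the contributions from each interior edge cancel (the two adjacent rectangles contribute the same edge-sum with opposite signs, once as a ``$g_2,g_8$-type'' and once as a ``$g_5,g_{11}$-type'' term), leaving only boundary edge contributions, and one checks these also sum to zero around the boundary. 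Hence there is at least one dependency among the $N_R$ local relations; I would argue there is exactly one, so the rank of the constraint system is $N_R - 1$, giving $\dim(\NC) = 3N_E - (N_R - 1)$. Then I must reconcile this with the claimed formula $N_V + 2N_E - 1$: this requires Euler's formula for the planar subdivision, $N_V - N_E + N_R = 1$ (for a simply connected $\O$ with the outer face excluded), equivalently $N_R - 1 = N_E - N_V$, and substituting gives $3N_E - (N_E - N_V) = N_V + 2N_E - 1$, as desired. Alternatively, and perhaps more cleanly, I would count directly in terms of the proposed generating set: there are $N_V$ functions $\varphi_j^V$ and $2N_E$ functions $\varphi_j^{E_\pm}$, total $N_V + 2N_E$, and I claim exactly one linear relation among them, so the dimension is $N_V + 2N_E - 1$; the two approaches must agree via Euler's formula, which serves as a useful consistency check.

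For the spanning claim, I would first observe that every function in $\NC$ is locally in $\bP_R$, and on each rectangle $\hbP$ is spanned by the eleven local functions $\varphi_j^V, \varphi_j^{E_\pm}$ (by the lemma preceding the definition of the element). Gluing these local functions consistently across interfaces, and noting that the global functions in Definition \ref{def:basis} restrict on each rectangle precisely to the corresponding local ones (or to zero), shows $\Span\{\varphi_j^V,\varphi_j^{E_\pm}\} \supseteq \NC$; the reverse inclusion is immediate since each such global function lies in $\NC$ by construction. So the full collection of $N_V + 2N_E$ functions spans $\NC$, and since $\dim(\NC) = N_V + 2N_E - 1$, there is exactly one linear relation among them. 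The next step is to \emph{identify} that relation explicitly, so as to confirm that it involves every $\varphi_j^V$ with nonzero coefficient and every $\varphi_j^{E_\pm}$ with nonzero coefficient — this is what guarantees that omitting \emph{any single one} of the functions still leaves a spanning set of the right cardinality, hence a basis. I expect the relation to be the global incarnation of \eqref{eq:relation2}: something like $\sum_j 4\varphi^{E}(\cdots) - 5(\cdots) = 0$ assembled over all edges, or more transparently, the statement that the constant function $1$ (which lies in $\NC$) has two different natural expansions, or that a suitable alternating combination of all the basis functions vanishes at every Gauss point. I would verify by evaluating the candidate combination at an arbitrary Gauss point $g_k$ and checking it gives $0$, using the defining values in Definition \ref{def:basis}.

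Once the unique relation is known to have all coefficients nonzero, the conclusion for $\B_1, \B_2, \B_3$ follows formally: a spanning set of size $d+1$ with a unique dependency all of whose coefficients are nonzero becomes a basis upon deleting any one element, because deleting an element that appears in the (essentially unique) dependency destroys that dependency without creating a new one (any new dependency among the remaining $d$ would, combined with none, force a second independent dependency in the original set, contradicting uniqueness). The sets $\B_1, \B_2, \B_3$ are exactly the three natural choices of which ``family'' to thin out by one, so each is a valid basis.

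The main obstacle I anticipate is the dimension count — specifically, proving that the $N_R$ local relations \eqref{eq:relation2} have rank exactly $N_R - 1$ rather than merely $\le N_R - 1$. Showing there is \emph{at least} one dependency is the easy summation-over-rectangles argument sketched above; showing there is \emph{at most} one requires either a careful rank argument (e.g., that any dependency must have constant coefficients over connected components, using that adjacent rectangles share an edge) or, more slickly, bypassing the rank question entirely by working from the explicit generating set of size $N_V + 2N_E$ and invoking Euler's formula together with the already-established local dimension $\dim(\hbP) = 11$. I would lean toward the latter route, treating the spanning-set computation and the single explicit global relation as the heart of the proof, with Euler's formula supplying the clean closed form $N_V + 2N_E - 1$.
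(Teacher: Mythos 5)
Your overall architecture (determine functions by their Gauss-point values, count the local constraints \eqref{eq:relation2}, use Euler's formula, then exhibit a unique all-nonzero dependency among the $N_V+2N_E$ generators) is reasonable, and in fact the paper gives no proof at all here, deferring to \cite{hleesheen-p2rec}, so there is no in-paper argument to compare with line by line. However, the central step of your dimension count is wrong. You claim the $N_R$ local relations have rank $N_R-1$ because summing them over all rectangles makes the interior-edge contributions cancel. They do not: an interior edge belongs to the \emph{same} parallel pair of edges for both adjacent parallelograms, so in the plain sum its contribution is doubled rather than cancelled; with a checkerboard choice of signs the interior contributions do cancel, but the boundary-edge contributions survive, so no dependency is produced. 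In fact, for $\NC$ the $N_R$ relations are linearly independent: writing a putative dependency $\sum_R c_R\ell_R=0$ and looking at the three Gauss points of a fixed edge $E$ gives $\sum_{R\supset E}\pm c_R=0$; a boundary edge forces $c_R=0$ for its rectangle, an interior edge forces $|c_{R_1}|=|c_{R_2}|$ across it, and connectivity of the mesh then kills every $c_R$. Your arithmetic hides the error: with rank $N_R-1$ and Euler's relation $N_V-N_E+N_R=1$ you would get $3N_E-(N_E-N_V)=N_V+2N_E$, not $N_V+2N_E-1$; the stated dimension requires rank exactly $N_R$, i.e.\ $\dim(\NC)=3N_E-N_R=N_V+2N_E-1$. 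The one-dependency mechanism you describe is the correct one for $\NC_0$, where the boundary Gauss values are zero and the checkerboard combination does become a genuine dependency, which is precisely why $\dim(\NC_0)=3N_E^i-(N_R-1)=N_V^i+2N_E^i$; the DOF counts in the paper's tables confirm both formulas and rule out your count for $\NC$.

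On the second half: the formal reduction (a spanning set of size $d+1$ with an essentially unique dependency whose coefficients are all nonzero remains a basis after deleting any one element) is fine, but you never actually produce the dependency; you only guess candidates. Without exhibiting it you cannot certify that every coefficient is nonzero, which is exactly what the ``omit any one'' conclusion needs. This gap is easy to fill: evaluating $\sum_j\alpha_j\varphi_j^V+\sum_j\bigl(\beta_j\varphi_j^{E_+}+\gamma_j\varphi_j^{E_-}\bigr)=0$ at the three Gauss points of an edge $E_j$ with endpoints $V,V'$ gives $5\beta_j+5\gamma_j=0$ at the midpoint and $\alpha_{V}+4\beta_j=0$, $\alpha_{V'}+4\gamma_j=0$ at the two outer Gauss points, so $\gamma_j=-\beta_j$ and the vertex coefficients alternate in sign along every edge; since the vertex graph of a face-to-face parallelogram mesh is bipartite, the solution space is one-dimensional with all coefficients nonzero (vertex coefficients $\pm4$ according to the two-coloring, edge coefficients $\mp1$ and $\pm1$). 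With that explicit relation, the corrected rank argument above, and a short verification that the $N_V+2N_E$ global functions span $\NC$ (solve for the coefficients edge by edge from the Gauss values), your proof closes.
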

\begin{thm}
    $\dim(\NC_0)=N_V^i+2N_E^i$.
    $\B=\{\varphi_i^V,i=1,2,\cdots,N_V^i,\varphi_j^{E_+},\varphi_j^{E_-},j=1,2,\cdots,N_E^i\}$
    forms a set of global basis functions for $\NC_0$.
\end{thm}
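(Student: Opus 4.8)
The plan is to show that $\B$ is a basis of $\NC_0$, following the same local-to-global reasoning as in Theorem \ref{thm:dim} but now with the boundary Gauss-point values pinned to zero; the dimension formula is then immediate from $\dim(\NC_0)=|\B|=N_V^i+2N_E^i$. The argument splits into three parts: (a) every function of $\B$ lies in $\NC_0$; (b) the functions of $\B$ are linearly independent; and (c) they span $\NC_0$. The conceptual point to keep in view is that the single global dependence responsible for the ``$-1$'' in Theorem \ref{thm:dim} is destroyed once the boundary values are forced to vanish, which is exactly why no such term appears for $\NC_0$.

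For (a), I would use that any vertex $V_j$ lying in the interior of $\O$ has all of its incident edges interior: an edge issuing from an interior vertex cannot lie on $\Gamma$, hence is shared by two rectangles. Consequently the Gauss points in $\G_h(j)$, on which $\varphi_j^V$ is supported, are all interior, so $\varphi_j^V$ vanishes at every boundary Gauss point. Likewise, for an interior edge $E_j$ the three Gauss points $M_j,M_j^+,M_j^-$ carrying $\varphi_j^{E_+}$ and $\varphi_j^{E_-}$ are interior, so these functions also vanish on $\G_h\cap\Gamma$. Thus $\B\subset\NC_0$.

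For (b), suppose a combination of the functions of $\B$, with coefficients $c_j^V$ over interior vertices and $c_j^{E_+},c_j^{E_-}$ over interior edges, vanishes identically. Reading this identity at Gauss points, evaluation at an interior edge midpoint $M_j$ isolates $5(c_j^{E_+}+c_j^{E_-})=0$, while evaluation at a near-vertex Gauss point gives $4\,(\text{near-vertex edge coefficient})+c_V^V=0$ when the adjacent vertex $V$ is interior, and $4\,(\text{near-vertex edge coefficient})=0$ when $V$ is a boundary vertex; in each case only the one incident edge function and the one vertex function of the nearest endpoint are nonzero at that point. The boundary relations force the edge coefficients of every interior edge touching $\Gamma$ to vanish; combining the midpoint and near-vertex relations then shows that adjacent interior vertices carry opposite coefficients $c_V^V$, while an interior vertex adjacent to $\Gamma$ must have $c_V^V=0$. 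Propagating through the connected graph of interior vertices forces all $c_V^V=0$, whence all edge coefficients vanish. This is precisely the checkerboard near-kernel underlying Theorem \ref{thm:dim}, now pinned to zero by the boundary.

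For (c), and this is the main obstacle, given $\varphi\in\NC_0$ I would determine the coefficients so that the combination matches $\varphi$ at every Gauss point; since a function of $\NC$ is determined by its Gauss-point values (Proposition \ref{prop:unisolvency} applied elementwise together with continuity), matching there suffices, and the boundary values agree trivially. Fixing $c_j^{E_+}+c_j^{E_-}$ from the midpoint values leaves one free parameter per interior edge together with one unknown per interior vertex, a total of $N_E^i+N_V^i$ unknowns, to be matched against the $2N_E^i$ near-vertex equations. The system is therefore overdetermined by $N_E^i-N_V^i=N_R-1$ equations, the last equality coming from Euler's relation $N_V-N_E+N_R=1$ together with the fact that the boundary cycle has as many vertices as edges. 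The crux is that these $N_R-1$ consistency conditions hold automatically: they are exactly the content of the $N_R$ local relations \eqref{eq:relation2} satisfied by $\varphi$ on each rectangle, of which precisely one becomes redundant after the boundary values are set to zero. Establishing this consistency---equivalently, that the restricted local relations have rank $N_R-1$---is the heart of the proof; with it in hand the expansion exists, $\B$ spans $\NC_0$, and combined with (b) we conclude $\dim(\NC_0)=N_V^i+2N_E^i$ with $\B$ a basis.
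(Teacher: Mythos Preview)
The paper does not actually prove this theorem: immediately after stating it, the authors write that the proof is ``quite similar to those in the literature \cite{hleesheen-p2rec}, and thus omitted.'' So there is nothing to compare your argument against beyond that pointer.

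Your outline is sound and is in the spirit of what the reference does. Parts (a) and (b) are correct as stated; in particular your observation that an interior vertex has only interior incident edges, and your propagation argument for linear independence (the checkerboard near-kernel pinned by the boundary), are exactly right. For part (c) you correctly isolate the real work---showing that the $N_R$ elementwise relations \eqref{eq:relation2}, after the boundary Gauss values are set to zero, drop to rank $N_R-1$---but you stop short of establishing it. One clean way to close this gap, rather than chasing the overdetermined coefficient system directly, is to combine your (a) and~(b) with a dimension count from Theorem~\ref{thm:dim}: the boundary-evaluation map $\NC\to\mathbb R^{3N_E^b}$ has image of dimension exactly $3N_E^b-1$ (the boundary-associated basis functions in $\B_1$ span a $(3N_E^b-1)$-dimensional space of boundary traces by the same checkerboard argument you used in (b), now applied on the closed boundary cycle), whence
\[
\dim(\NC_0)=\dim(\NC)-(3N_E^b-1)=(N_V+2N_E-1)-(3N_E^b-1)=N_V^i+2N_E^i,
\]
using $N_V^b=N_E^b$. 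Equivalently, the single linear dependence among the $N_R$ restricted constraints is the checkerboard-signed sum $\sum_R\epsilon_R(\text{relation}_R)$, which telescopes to a combination of boundary Gauss values only and hence vanishes on $\NC_0$. Either route completes your part~(c).
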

The proofs of the above theorems are quite similar to those in the
literature \cite{hleesheen-p2rec}, and thus omitted. Here we remark that our
finite element space is a little different from those in the literature
\cite{hleesheen-p2rec, kim}. Those finite element spaces
are nothing but conforming element spaces enriched by some suitable bubble
function spaces. Thus the idea of Fortin and Soulie
\cite{fortin-soulie-quad-nc-2d}
is not applicable here.

\section{The interpolation operator and convergence analysis}
In this section we will define an interpolation operator and analyze convergence in the case of
Dirichlet problem. The case of Neumann problem is quite similar and the results will be briefly
stated with their details being omitted.

Denote by $(\cdot,\cdot)$ the $L^2(\O)$ inner product and $(f,v)$ will be understood as the duality pairing between $H^{-1}(\O)$ and $H^1_0(\O)$, which
is an extension of the duality paring between $L^2(\O)$. By $\|\cdot\|_k$ and
$|\cdot|_k$ we adopt the standard notations for the norm and seminorm for the Sobolev spaces $H^k(\O)$.
Consider then the following Dirichlet problem:
\begin{subequations}\label{eq:elliptic}
\begin{eqnarray}
    -\nabla\cdot\left(\balpha\nabla u\right)+\beta u&=&f, \quad \O, \\
    u&=&0,\quad \Gamma,
\end{eqnarray}
\end{subequations}
with $\balpha=(\alpha_{jk}),\alpha_{jk},\beta\in L^{\infty}(\O), j,k = 1,2, 0 <\alpha_*|\bxi|^2\leq\bxi^t\balpha(x)\bxi\leq\alpha^*
|\bxi|^2 <\infty,\bxi\in\R^2, \beta(x)\geq 0, x\in\O$, and $f\in H^1(\O)$.
We will assume that the coefficients
are sufficiently smooth and that the elliptic problem
\eqref{eq:elliptic} has an $H^4(\O)$-regular solution.
 The weak problem is then given as usual:
find $u\in H_0^1(\O)$ such that
\begin{equation}
a(u, v) = (f,v), \quad v\in H^1_0(\O),
    \label{eq:weak}
\end{equation}
where $a: H^1_0(\O)\times H^1_0(\O) \rightarrow \R$  is the bilinear
form defined by $a(u,v) = (\balpha \nabla u,\nabla v) + (\beta u,
v)$ for all $u,v\in H^1_0(\O)$. Our nonconforming method for Problem
\eqref{eq:elliptic} states as follows: find $u_h\in\NC_0$ such that
\begin{equation}
    a_h(u_h, v_h) = (f, v_h),\quad v_h\in\NC_0,
    \label{eq:solution}
\end{equation}
where
$$
a_h(u, v) =\sum_{R\in \Tau_h}a_R(u,v),
$$
with $a_R$ being the restriction of $a$ to $R$.

For a given rectangle $R\in \Tau_h$, define the local interpolation operator
$\Pi_R : W^{1,p}(R)\cap H^1_0(\O)\longrightarrow \bP_R,~ p>1,$ by
$$
\Pi_R\phi(g_i)=\phi(g_i),
$$
for all Gauss points on the edges of $R$.
The global interpolation operator $\Pi_h$: $W^{1,p}(\O)\cap H^1_0(\O)\rightarrow\NC_0$
is then defined through the local interpolation operator $\Pi_R$ by $\Pi_h|_R=\Pi_R$ for all $R\in\Tau_h$. Since $\Pi_h$
preserves $P_3$ for all $R\in\Tau_h$, it follows from the Bramble-Hilbert Lemma that
\begin{eqnarray}\label{eq:hilbert}
    \begin{split}
\sum_{R\in\Tau_h}\|\phi-\Pi_h\phi\|_{L^2(R)}+h\sum_{R\in\Tau_h}\|\phi-\Pi_h\phi\|_{H^1(R)}\le
Ch^k|\phi|_{H^k(\O)},
\\
\phi\in W^{k,p}(\O)\cap H_0^1(\O),\, 2\le k\le 4.
\end{split}
\end{eqnarray}
Denote $E_{jk}=(\pa R_j\cap \pa R_k)^{\circ}$ for all $R_j,R_k\in\Tau_h$ and by
$\Gamma_j$ the boundary face $(\pa R_j\cap \pa\O)^{\circ}$ of $\Tau_h$.
 Then define
 $$\Lambda^h=\{ \lambda|\lambda_{jk}=\lambda|_{E_{jk}}\in
P_2(E_{jk});\lambda_{jk}+\lambda_{kj}=0;~\lambda_j=\lambda|_{\Gamma_j}\in
P_2(\Gamma_j)\},$$ where $P_2(E)$ denotes the set of quadratic polynomials on the face
$E$. Also define the projection $P_h:H^{3/2}(\O)\rightarrow \Lambda^h$
 such that
 \begin{eqnarray*}
\bigg<
\bnu^T_j\balpha\nabla v_j-P_hv_j,z
\bigg>_E=0\quad \mbox{for all}~z\in P_2(E)~\mbox{for all}~E\in \E_h,
 \end{eqnarray*}
where $v_j=v|_{R_j}$ and $\bnu _j^T$ is the transpose of the unit outward normal to $R_j$. Then we
have the following standard polynomial approximation result:
\begin{eqnarray}\label{eq:projection}
\bigg\{
\sum_j\|\bnu^T_j\balpha\nabla v_j-P_h v_j\|_{L^2(\pa R_j)}^2
\bigg\}^{1/2}
\le Ch^{k-3/2}\|v\|_k,\quad \forall v\in W^{k,p}(\O),k=2,3,4,p>3.
\end{eqnarray}
Since $w_j-w_k$ has zero values at the Gauss points on $E_{jk}$ for all
$w\in\NC_0$ and the 3-point Gauss quadrature is exact on polynomials of degree
no more than 5, the following useful orthogonality holds. (See also Lemma \ref{lem:orth})
\begin{lemma}
If $u\in H^{3/2}(\O)$, then the following equality holds:
\begin{eqnarray*}
\langle P_h u_j,w_j\rangle_{E_{jk}}+\langle P_h
u_k,w_k\rangle_{E_{kj}}=\langle P_h u_j,w_j-w_k\rangle_{E_{jk}}=0
\quad \mbox{for all}~w\in\NC_0.
\end{eqnarray*}
\end{lemma}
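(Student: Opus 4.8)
The plan is to establish the claimed orthogonality in two steps: first a local computation on a single interface $E_{jk}$, then an assembly over all interfaces that invokes the continuity of $\NC_0$-functions at Gauss points. For the local step, fix an interface $E = E_{jk}$ shared by $R_j$ and $R_k$. By definition of the projection $P_h$, the quantity $P_h u_j \in P_2(E_{jk})$, and $P_h u_k \in P_2(E_{kj})$ with the antisymmetry convention $P_h u_j = -P_h u_k$ built into $\Lambda^h$ (the orientation of the normal flips between the two elements). So $\langle P_h u_j, w_j\rangle_{E_{jk}} + \langle P_h u_k, w_k\rangle_{E_{kj}} = \langle P_h u_j, w_j\rangle_{E_{jk}} - \langle P_h u_j, w_k\rangle_{E_{jk}} = \langle P_h u_j, w_j - w_k\rangle_{E_{jk}}$. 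That gives the first equality essentially by bookkeeping.

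For the second equality — that this pairing vanishes — the key observation is already flagged in the text: for $w \in \NC_0$, the jump $w_j - w_k$ vanishes at the three Gauss points on $E_{jk}$. First I would note that $(w_j - w_k)|_{E_{jk}}$ is a univariate polynomial of degree at most $3$ (since any $\varphi \in \hbP$ restricts to a polynomial of degree $\le 3$ on each edge, and this property is preserved by the affine maps $F_{R_j}, F_{R_k}$). Writing $\psi := (w_j - w_k)|_{E_{jk}}$, we have $\psi \in P_3(E_{jk})$ with $\psi$ vanishing at the three Gauss points. Now the integrand $P_h u_j \cdot \psi$ is a product of a $P_2$ function and a $P_3$ function, hence a polynomial of degree at most $5$ on $E_{jk}$, so the $3$-point Gauss quadrature on $E_{jk}$ is exact for it. Since $\psi$ vanishes at all three Gauss nodes, every term in the quadrature sum is zero, and therefore $\langle P_h u_j, w_j - w_k\rangle_{E_{jk}} = \int_{E_{jk}} P_h u_j \cdot \psi \, \d\sigma = 0$. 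This is precisely the mechanism of Lemma \ref{lem:orth}, now applied with $q = P_h u_j$ restricted to the interface playing the role of the $Q_2$-test function.

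There is essentially no serious obstacle here; the statement is a direct packaging of the Gauss-quadrature exactness argument. The one point that needs a line of care is the degree count: one must confirm that $w_j - w_k$ really is of degree $\le 3$ along $E_{jk}$ after pulling back through the affine maps — this holds because an affine change of variables maps the edge to the reference edge linearly, so a degree-$\le 3$ polynomial in the reference edge parameter stays degree $\le 3$, and the extra basis function $x^3 y - x y^3$ also restricts to degree $\le 3$ on each straight edge of $\hR$ (indeed it is degree $\le 1$ there, since one of $x,y$ is constant $\pm 1$). The other point worth a remark is that $u \in H^{3/2}(\O)$ is exactly the regularity needed for the normal flux trace $\bnu_j^T \balpha \nabla u_j$ to have a well-defined $L^2$ trace on $\pa R_j$ so that $P_h u_j$ is meaningful; this is why the hypothesis is stated that way. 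Assembling, the displayed chain of equalities follows, completing the proof.
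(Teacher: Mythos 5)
Your proof is correct and follows essentially the same argument the paper itself sketches in the sentence preceding the lemma: the antisymmetry built into $\Lambda^h$ (equivalently, single-valuedness of the normal flux across $E_{jk}$ for $u\in H^{3/2}(\O)$) gives the first equality, while exactness of the 3-point Gauss quadrature for integrands of degree $\le 5$ combined with the vanishing of the jump $w_j-w_k$ at the three Gauss points gives the second. One small correction that does not affect the argument: the restriction of $x^3y-xy^3$ to an edge of $\hR$ is cubic (e.g.\ $\pm(x^3-x)$ on $y=\pm 1$), not of degree $\le 1$, but since only degree $\le 3$ is needed for the quadrature count, your conclusion stands.
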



Denote the broken energy norm $\|\cdot\|_h$ on $\NC+H^1(\O)$ by
\begin{eqnarray*}
    \|\varphi \|_h=a_h(\varphi,\varphi)^{1/2}\quad \text{for all}\
    \varphi\in \NC+H^1(\O).
\end{eqnarray*}
We now consider the energy-norm error estimate and first consider the
following Strang lemma \cite{strang-fix}.
\begin{lemma}
    Let $u\in H^1(\O)$ and $u_h\in \NC_0$ be the solutions of Eq.
    \eqref{eq:weak} and Eq. \eqref{eq:solution}, respectively.
Then
\begin{eqnarray}\label{eq:strang}
    \|u-u_h\|_h\leq c\Big\{\inf_{v\in \NC_0}\|u-v\|_h+\sup_{w\in\NC_0}
    \frac{|a_h(u,w)-\langle f,w\rangle|}{\|w\|_h}\Big\}.
\end{eqnarray}
    \label{lem:strang}
\end{lemma}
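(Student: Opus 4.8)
The plan is to run the classical second Strang lemma argument, which is especially transparent here because, by its very definition, $\|\varphi\|_h^2=a_h(\varphi,\varphi)$ is precisely the quadratic form of the symmetric, positive semi-definite bilinear form $a_h$ on $\NC_0+H^1(\O)$. I would first isolate the only two structural facts about $a_h$ that the argument uses: (i) the Cauchy--Schwarz inequality for semi-inner products, $|a_h(\varphi,\psi)|\le\|\varphi\|_h\,\|\psi\|_h$, i.e.\ continuity with constant $1$; and (ii) $a_h(w,w)=\|w\|_h^2$, i.e.\ coercivity on $\NC_0$ with constant $1$. (If $\beta$ vanishes identically then $a_h$ is only a semi-inner product, but the argument below never divides by a possibly vanishing quantity, so this causes no harm.) Recall also that every $a_h$ below is read as the elementwise sum $\sum_{R\in\Tau_h}a_R$, which is meaningful for $w\in\NC_0$ because the exact solution satisfies $u\in H^1(\O)$ (in fact $u\in H^4(\O)$) and $f\in L^2(\O)$.

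Next I would fix an arbitrary $v\in\NC_0$ and set $w=u_h-v\in\NC_0$. Using (ii), then the discrete equation \eqref{eq:solution} to replace $a_h(u_h,w)$ by $\langle f,w\rangle$, and then adding and subtracting $a_h(u,w)$, I get
\[
\|w\|_h^2=a_h(u_h-v,w)=\big[\langle f,w\rangle-a_h(u,w)\big]+a_h(u-v,w).
\]
The second summand is at most $\|u-v\|_h\,\|w\|_h$ by (i), and the first is at most $S\,\|w\|_h$ with $S:=\sup_{0\neq w'\in\NC_0}|a_h(u,w')-\langle f,w'\rangle|/\|w'\|_h$ the consistency error. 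Hence $\|w\|_h^2\le(\|u-v\|_h+S)\|w\|_h$, so $\|w\|_h\le\|u-v\|_h+S$ (trivially when $\|w\|_h=0$). The triangle inequality then gives $\|u-u_h\|_h\le\|u-v\|_h+\|w\|_h\le 2\|u-v\|_h+S$, and taking the infimum over $v\in\NC_0$ yields \eqref{eq:strang} with $c=2$. (Had one used the broken $H^1$-seminorm instead, $c$ would absorb the factor $\alpha^*/\alpha_*$; with the energy norm as defined here, $c=2$ is enough.)

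I do not expect a genuine obstacle inside this lemma --- it is entirely formal once (i) and (ii) are in place, and the only thing to watch is the bookkeeping noted above (staying in the domain of the broken form, and the harmless loss of positive-definiteness when $\beta\equiv 0$). The substantive part of the overall analysis is not here but in the next step: bounding the consistency term $S$ by $O(h^3)$. That is exactly what the projection $P_h$, the trace estimate \eqref{eq:projection}, and the Gauss-point orthogonality of Lemma \ref{lem:orth} (together with the exactness of the three-point Gauss rule up to degree $5$) are set up to deliver, and I would carry it out in the convergence theorem rather than here.
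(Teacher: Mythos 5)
Your proof is correct: it is the standard second Strang lemma argument (coercivity-with-constant-one from the definition of $\|\cdot\|_h$, insertion of the discrete equation, Cauchy--Schwarz, triangle inequality), which is exactly the classical result the paper invokes without proof by citing Strang--Fix. The only minor point is that your Cauchy--Schwarz step $|a_h(\varphi,\psi)|\le\|\varphi\|_h\|\psi\|_h$ tacitly assumes $\balpha$ is symmetric; without symmetry one still gets continuity of $a_h$ with respect to $\|\cdot\|_h$ with a constant depending on $\alpha^*/\alpha_*$, which is harmless since the constant $c$ in \eqref{eq:strang} is generic.
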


Assume sufficient regularity such that $u\in H^4$. Due to \eqref{eq:hilbert}, the first term in the right side
of \eqref{eq:strang} is bounded by
\begin{eqnarray}\label{eq:int_error}
    \inf_{v\in \NC_0}\|u-v\|_h\leq \|u-\Pi_h u\|_h\leq
    ch^s|u|_{H^{s+1}(\O)},1<s\leq 3
\end{eqnarray}

In order to bound the second term of the right side of \eqref{eq:strang} which denotes the consistency error,
integrate by parts elementwise so that
\begin{eqnarray*}
    a_h(u,w)-\langle f,w\rangle&=&\sum_j\langle \bnu^T_j\balpha\nabla
    u_j,w\rangle_{\partial R_j\backslash \Gamma_j}\\
    &=& \sum_j\langle \bnu^T_j\balpha\nabla
    u_j-P_h u_j,w\rangle_{\partial R_j\backslash \Gamma_j}\\
    &=& \sum_j\langle \bnu^T_j\balpha\nabla
    u_j-P_h u_j,w-m_j\rangle_{\partial R_j\backslash \Gamma_j}\\
\end{eqnarray*}
where $m_j\in Q_2(R_j)$ is a biquadratic polynomial on $R_j$. In particular, if $m_j$ is
chosen as the $Q_2$ projection of $w$ in $R_j$, due to the trace theorem,
\eqref{eq:hilbert} and \eqref{eq:projection}, we get
\begin{eqnarray}
    \begin{split}
    \bigg|\sum_j\langle \bnu^T_j\balpha\nabla
    u_j-P_h u_j,w-m_j\rangle_{\partial R_j}\bigg|
    &\leq \sum_j \|\bnu^T_j\balpha\nabla u_j-P_h u_j\|_{L_2(\partial R_j)}
    \|w-m_j\|_{L_2(\partial R_j)}\\
    &\leq \Big(\sum_j \|\bnu^T_j\balpha\nabla u_j-P_h u_j\|^2_{L_2(\partial
    R_j)}\Big)^{1/2}\Big(\sum_j \|w-m_j\|^2_{L_2(\partial
    R_j)}\Big)^{1/2}\\
    &\leq Ch^{k-3/2}\|u\|_k \Big(\sum_j
    \|w-m_j\|_{L_2(R_j)}\|w-m_j\|_{H_1(R_j)}\Big)^{1/2}\\
    & \leq Ch^{k-3/2}\|u\|_k h^{1/2}\|w\|_h\\
    &=Ch^{k-1}\|u\|_k \|w\|_h,\quad k=2,3,4.
    \end{split}
    \label{eq:consistent}
\end{eqnarray}
Thus, we have
\begin{eqnarray*}
    \sup_{w\in\NC_0}\Big\{
    \frac{|a_h(u,w)-\langle f,w\rangle|}{\|w\|_h}\Big\}\leq
    Ch^{k-1}\|u\|_{H^k(\O)}, \quad k=2,3,4.
\end{eqnarray*}
By collecting the above results, we get the following energy-norm error estimate.
\begin{thm}
    Let $u\in H^{k+1}(\O)\cap H_0^1(\O)$ and $u_h\in\NC_0$ be the solution of
    \eqref{eq:weak} and \eqref{eq:solution}, respectively. Then we have
    \begin{eqnarray*}
        \|u-u_h\|_h\leq ch^k \|u\|_{H^{k+1}(\O)},\quad k=1,2,3.
    \end{eqnarray*}
\end{thm}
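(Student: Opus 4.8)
The plan is to derive the final energy-norm estimate by assembling the two ingredients already prepared in the section, namely the Strang lemma \eqref{eq:strang} together with the interpolation bound \eqref{eq:int_error} for the best-approximation term and the consistency-error estimate \eqref{eq:consistent} for the second term. Concretely, I would fix $k\in\{1,2,3\}$ and assume $u\in H^{k+1}(\O)\cap H_0^1(\O)$, so that the regularity needed to invoke both \eqref{eq:hilbert} and \eqref{eq:projection} (with the exponent shifted appropriately) is available.

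First I would bound the approximation term: by taking $v=\Pi_h u$ in the infimum and applying \eqref{eq:int_error} with $s=k$, one gets $\inf_{v\in\NC_0}\|u-v\|_h\le ch^k|u|_{H^{k+1}(\O)}\le ch^k\|u\|_{H^{k+1}(\O)}$. Next I would bound the consistency term: the chain of inequalities in \eqref{eq:consistent}, read with the index $k$ replaced by $k+1$ (so that the relevant Sobolev norm is $\|u\|_{H^{k+1}(\O)}$ and the power of $h$ is $h^{(k+1)-1}=h^k$), gives
\[
\sup_{w\in\NC_0}\frac{|a_h(u,w)-\langle f,w\rangle|}{\|w\|_h}\le Ch^{k}\|u\|_{H^{k+1}(\O)},\quad k=1,2,3.
\]
Substituting both bounds into \eqref{eq:strang} and combining constants yields $\|u-u_h\|_h\le ch^k\|u\|_{H^{k+1}(\O)}$, which is the claim.

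The only genuine subtlety — and the step I would treat most carefully — is the bookkeeping of the regularity index. The displayed estimates \eqref{eq:int_error} and \eqref{eq:consistent} are stated with mismatched labelling ($s$ versus $k$), and \eqref{eq:projection} carries the constraint $p>3$; so I would make explicit that for a solution in $H^{k+1}(\O)$ with $k\le 3$ one has $u\in W^{k+1,p}(\O)$ for the required $p$ by Sobolev embedding in two dimensions, and that the powers of $h$ coming from the trace inequality (the $h^{1/2}$ factor converting $\|w-m_j\|_{L_2(\partial R_j)}$ into $\|w-m_j\|_{L_2(R_j)}^{1/2}\|w-m_j\|_{H^1(R_j)}^{1/2}$) are exactly what is needed to land on $h^k$. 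Everything else is a direct citation of the preceding lemmas, so no new estimate has to be proved; the work is purely in aligning the indices and absorbing constants.
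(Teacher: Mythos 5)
Your proposal is correct and follows essentially the same route as the paper: the theorem is obtained exactly by inserting the interpolation bound \eqref{eq:int_error} and the consistency estimate \eqref{eq:consistent} (with the index shifted from $k$ to $k+1$) into the Strang lemma \eqref{eq:strang}. Your extra remarks on the regularity bookkeeping and the $p>3$ requirement in \eqref{eq:projection} only make explicit what the paper leaves implicit.
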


By the standard Aubin-Nitsche duality argument, the $L_2(\O)$-error
estimate can be easily obtained, but the corresponding proof is omitted. We
state the result in the following theorem.
\begin{thm}
    Let $u\in H^{k+1}(\O)\cap H_0^1(\O)$ and $u_h\in\NC_0$ be the solution of
    \eqref{eq:weak} and \eqref{eq:solution}, respectively. Then we have
    \begin{eqnarray*}
        \|u-u_h\|_{0}\leq Ch^{k+1} \|u\|_{H^{k+1}(\O)},\quad k=1,2,3.
    \end{eqnarray*}
\end{thm}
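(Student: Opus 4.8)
The plan is a standard Aubin--Nitsche duality argument; the only point that really uses the specific element is the treatment of one interface sum, which I describe last. Write $e=u-u_h$ and introduce the adjoint problem: find $w\in H^1_0(\O)$ with $a(v,w)=(v,e)$ for every $v\in H^1_0(\O)$, i.e.\ $-\nabla\cdot(\balpha^T\nabla w)+\beta w=e$ in $\O$, $w=0$ on $\Gamma$; since $\O$ is a (convex) parallelogram and the coefficients of the problem are smooth, $w\in H^2(\O)$ with $\|w\|_2\le C\|e\|_0$. Taking $v=u$ gives $(u,e)=a(u,w)=a_h(u,w)$, and integrating $(u_h,e)$ by parts on each $R\in\Tau_h$ -- using that the conormal derivative $\bnu_E^T\balpha^T\nabla w$ of $w\in H^2(\O)$ is single--valued on each interior edge $E$ -- yields
\[
\|e\|_0^2=(u,e)-(u_h,e)=a_h(e,w)-\sum_{E\in\E_h}\int_E[u-u_h]_E\,\bnu_E^T\balpha^T\nabla w\,\d\sigma=:a_h(e,w)+J ,
\]
where $[u-u_h]_E$ is the jump of $u-u_h$ across $E$ (on a boundary edge, the one--sided trace, which equals $-u_h$ there since $u$ vanishes on $\Gamma$).

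For $a_h(e,w)$ I would split $a_h(e,w)=a_h(e,w-\Pi_h w)+\bigl(a_h(u,\Pi_h w)-(f,\Pi_h w)\bigr)$, using the discrete equation \eqref{eq:solution} with test function $\Pi_h w\in\NC_0$. The first term is $\le\|e\|_h\|w-\Pi_h w\|_h\le(Ch^k\|u\|_{k+1})(Ch\|w\|_2)\le Ch^{k+1}\|u\|_{k+1}\|e\|_0$, by the energy--norm estimate already proved and \eqref{eq:hilbert} with $k=2$. The second term is the consistency error tested against the interpolant of the \emph{smooth} function $w$; reusing the representation behind \eqref{eq:consistent} but taking $m_j$ to be the $Q_2$--projection of $w$ itself (not of $\Pi_h w$) and writing $\Pi_h w-m_j=(\Pi_h w-w)+(w-m_j)$, the trace theorem and \eqref{eq:hilbert} (with $k=2$) give $\bigl(\sum_j\|\Pi_h w-m_j\|_{L^2(\pa R_j)}^2\bigr)^{1/2}\le Ch^{3/2}\|w\|_2$ -- one extra power of $h$ compared with a generic $\NC_0$ test function -- so that by \eqref{eq:projection} this term is $\le Ch^{k-1/2}\|u\|_{k+1}\cdot Ch^{3/2}\|w\|_2\le Ch^{k+1}\|u\|_{k+1}\|e\|_0$.

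The crux is the interface sum $J$. Since any $\varphi\in\hbP$ is of degree $\le3$ on each edge, $[u-u_h]_E=-[u_h]_E$ is, on each edge $E$, a polynomial of degree $\le3$ vanishing at the three Gauss points of $E$; because the $3$--point Gauss rule integrates polynomials of degree $\le5$ exactly (cf.\ Lemma \ref{lem:orth}), this gives $\int_E[u-u_h]_E\,q\,\d\sigma=0$ for all $q\in P_2(E)$. Taking $q\equiv1$ shows that the two one--sided traces of $u-u_h$ on $E$ have equal mean, so the trace--Poincar\'e inequality applied on $R_j$ and on $R_k$ separately yields
\[
\|[u-u_h]_E\|_{L^2(E)}\le Ch_E^{1/2}\bigl(|u-u_h|_{H^1(R_j)}+|u-u_h|_{H^1(R_k)}\bigr),
\]
hence $\sum_{E\in\E_h}h_E^{-1}\|[u-u_h]_E\|_{L^2(E)}^2\le C\|u-u_h\|_h^2\le Ch^{2k}\|u\|_{k+1}^2$. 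On the other factor, the orthogonality above lets us subtract from $\bnu_E^T\balpha^T\nabla w$ any polynomial $q_E$ of degree $\le2$ on $E$ (for instance its $L^2(E)$--mean); a standard edge approximation estimate (cf.\ \eqref{eq:projection}, which here needs only $w\in H^2$) then gives $\sum_{E\in\E_h}h_E\|\bnu_E^T\balpha^T\nabla w-q_E\|_{L^2(E)}^2\le Ch^2\|w\|_2^2$. A Cauchy--Schwarz over the edges with weights $h_E^{-1/2}$ and $h_E^{1/2}$ produces $|J|\le Ch^k\|u\|_{k+1}\cdot Ch\|w\|_2\le Ch^{k+1}\|u\|_{k+1}\|e\|_0$. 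Combining the three estimates gives $\|e\|_0^2\le Ch^{k+1}\|u\|_{k+1}\|e\|_0$, and dividing by $\|e\|_0$ finishes the proof; the Neumann case is handled in the same way.

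I expect the main obstacle to be precisely the sum $J$. A crude bound $\|[u-u_h]_E\|_{L^2(E)}\lesssim h_E^{-1/2}\|u-u_h\|_{L^2(R_j\cup R_k)}$ from an inverse inequality would only yield $|J|\lesssim\|e\|_0^2$, which cannot be absorbed into the left-hand side; the extra power of $h$ is recovered by observing that the jump, being a degree--$3$ polynomial that vanishes at the three symmetric Gauss nodes, has \emph{zero mean on each edge}, so that it is controlled by the broken $H^1$--seminorm of $u-u_h$ rather than by $h^{-1}\|u-u_h\|_0$. This zero-mean property, together with the sharpened consistency estimate against $\Pi_h w$, is exactly where the degree--$3$ edge traces and the Gauss-point degrees of freedom of the element are used.
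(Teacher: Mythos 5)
Your proposal is correct and follows exactly the route the paper intends: the paper omits the proof, stating only that the $L_2$ estimate follows "by the standard Aubin--Nitsche duality argument," and your argument is a complete and correct execution of that duality argument — including the two points the paper's sketch leaves implicit, namely the sharpened consistency bound when testing with $\Pi_h w$ (gaining $h^{3/2}$ from $w\in H^2$) and the control of the interface jump term via the Gauss-point orthogonality of the degree-$3$ edge traces to $P_2(E)$. No gaps; the regularity $\|w\|_2\le C\|e\|_0$ is justified since $\O$ is a convex parallelogram with smooth coefficients, as assumed in the paper.
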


Instead of the Dirichlet problem, if the following Neumann problem
\begin{subequations}
    \begin{eqnarray}
        -\nabla\cdot \left( \balpha \nabla u \right)+\beta u=f,\quad \O, &&\\
       \nu\cdot \left(\balpha \nabla u \right) +\gamma u=g,\quad \Gamma,&&
    \end{eqnarray}
    \label{eq:Neuman}
\end{subequations}
is considered, the weak problem \eqref{eq:weak} is then replaced by finding $u\in H^1(\O)$ such that
\begin{equation}
    a^n(u,v)=(f,v)+\langle g,v\rangle,\quad v\in H^1(\O),
    \label{eq:weakN}
\end{equation}
where $a^n$ is the bilinear form defined by $a^n(u,v)=(\balpha\nabla u,\nabla
v)+(\beta u,v)+\langle\gamma u,v\rangle$ for all $u,v\in H^1(\O)$, and
$\langle\cdot,\cdot\rangle$ is the paring between $H^{-1/2}(\Gamma)$ and $H^{1/2}(\Gamma)$.
Thus, the nonconforming method
for Problem \eqref{eq:Neuman} states as follows: find $u_h\in \NC$ such that
\begin{equation}
    a^n_h(u_h,v_h)=(f,v_h)+\langle g,v_h\rangle,\quad v_h\in \NC.
    \label{eq:Nsolution}
\end{equation}
Then all the arguments given above for Dirichlet case hold analogously
Hence one can have the following result.
\begin{thm}\label{th:Neu}
    Let $u\in H^{k+1}(\O)$ and $u_h\in\NC$ satisfy \eqref{eq:weakN} and \eqref{eq:Nsolution}, respectively.
    Then we have the energy-norm and $L^2$-norm error estimates:
    \begin{eqnarray*}
        &&||u-u_h||_h\leq Ch^k||u||_{H^{k+1}(\O)},\\
        &&||u-u_h||_0\leq Ch^{k+1}||u||_{H^{k+1}(\O)},\ k=1,2,3.
    \end{eqnarray*}
\end{thm}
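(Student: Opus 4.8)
The plan is to transcribe, step by step, the analysis already carried out for the Dirichlet problem in Section 3, the only genuinely new feature being the bookkeeping of the boundary contributions coming from the Robin-type condition $\bnu\cdot(\balpha\nabla u)+\gamma u=g$. First I would record the Neumann analogue of the Strang lemma (Lemma \ref{lem:strang}). Assuming $a^n_h$ is coercive on $\NC$ (which holds, for instance, when $\beta\ge\beta_0>0$, or more generally when $\gamma\ge\gamma_0>0$ on part of $\Gamma$ together with a discrete trace inequality on $\NC$), one gets
\[
\|u-u_h\|_h\le C\Big\{\inf_{v\in\NC}\|u-v\|_h+\sup_{w\in\NC}\frac{|a^n_h(u,w)-(f,w)-\langle g,w\rangle|}{\|w\|_h}\Big\}.
\]
The first term is controlled exactly as in \eqref{eq:int_error}: the global interpolant $\Pi_h u$ now lands in $\NC$ (there is no boundary constraint to respect), it still preserves $P_3$ elementwise, so the Bramble--Hilbert bound \eqref{eq:hilbert} gives $\inf_{v\in\NC}\|u-v\|_h\le\|u-\Pi_h u\|_h\le Ch^k|u|_{H^{k+1}(\O)}$ for $1\le k\le 3$.

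For the consistency error I would integrate by parts elementwise. Writing
$a^n_h(u,w)=\sum_R\big[(-\nabla\cdot(\balpha\nabla u)+\beta u,w)_R+\langle\bnu_R^T\balpha\nabla u,w\rangle_{\pa R}\big]+\langle\gamma u,w\rangle_\Gamma$
and using the PDE, the interior volume terms combine into $(f,w)$. On each interior interface the normal flux $\bnu^T\balpha\nabla u$ is single-valued for the exact solution, so the edge terms collapse to $\sum_{E_{jk}}\langle\bnu_j^T\balpha\nabla u_j,w_j-w_k\rangle_{E_{jk}}$; on each boundary face $\Gamma_j$ the Robin condition shows $\langle\bnu_j^T\balpha\nabla u_j,w_j\rangle_{\Gamma_j}+\langle\gamma u,w_j\rangle_{\Gamma_j}=\langle g,w_j\rangle_{\Gamma_j}$, which is cancelled by $(f,w)+\langle g,w\rangle$. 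Hence
\[
a^n_h(u,w)-(f,w)-\langle g,w\rangle=\sum_{E_{jk}}\langle\bnu_j^T\balpha\nabla u_j,w_j-w_k\rangle_{E_{jk}},
\]
which has precisely the same form as in the Dirichlet analysis. From here the chain of inequalities \eqref{eq:consistent} applies verbatim: insert $P_h u_j$ using the orthogonality $\langle P_h u_j,w_j-w_k\rangle_{E_{jk}}=0$ (valid for $w\in\NC$ too, since $w_j-w_k$ vanishes at the three Gauss points on $E_{jk}$ and the $3$-point rule is exact on $P_5$, cf.\ Lemma \ref{lem:orth}), subtract the $Q_2$-projection $m_j$ of $w$ on $R_j$, and apply the trace theorem together with \eqref{eq:hilbert} and \eqref{eq:projection} to reach the bound $Ch^{k-1}\|u\|_{H^k(\O)}\|w\|_h$. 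Combining the two contributions gives $\|u-u_h\|_h\le Ch^k\|u\|_{H^{k+1}(\O)}$ for $k=1,2,3$.

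For the $L^2(\O)$-estimate I would run the Aubin--Nitsche duality argument. Let $\psi\in H^2(\O)$ solve the adjoint Neumann problem with right-hand side $u-u_h$ (assuming $H^2$-regularity of the dual problem). Testing with $u-u_h$ and subtracting the discrete equation, $\|u-u_h\|_0^2$ splits into a term $a^n_h(u-u_h,\psi-\Pi_h\psi)$, bounded by the energy error times $\|\psi-\Pi_h\psi\|_h=O(h\|\psi\|_2)$, plus two consistency-type terms — one for $u$ tested against $\Pi_h\psi$ and its adjoint counterpart — each of which, by the interior-edge analysis above, carries an extra factor $h$. This yields $\|u-u_h\|_0\le Ch^{k+1}\|u\|_{H^{k+1}(\O)}$.

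The main obstacle I foresee is \emph{not} in the interior estimates, which are identical to the Dirichlet case, but in justifying coercivity of $a^n_h$ on the broken nonconforming space $\NC$ when $\beta$ is not uniformly positive: one then needs a discrete Poincar\'e--Friedrichs inequality controlling the $L^2(\O)$ mass of $w\in\NC$ by its broken gradient together with its boundary trace, adapted to the fact that functions in $\NC$ are continuous only at the twelve Gauss points per element. Verifying that the standard argument — resting on the moment-continuity (patch-test) property encoded in Lemma \ref{lem:orth} — goes through for this element is the one point that needs genuine checking; once it is in place, the remainder of the proof is a routine transcription of Section 3.
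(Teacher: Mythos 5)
Your proposal is correct and follows essentially the same route as the paper, which gives no separate proof but simply remarks that all the Dirichlet-case arguments (Strang lemma, interpolation estimate \eqref{eq:hilbert}, consistency bound \eqref{eq:consistent} via $P_h$ and the Gauss-point orthogonality, and Aubin--Nitsche duality) carry over analogously; your transcription, including the cancellation of boundary-face terms through the Robin condition, is exactly the intended argument. The coercivity of $a^n_h$ on $\NC$ that you flag is indeed left implicit in the paper (it is needed already for well-posedness of \eqref{eq:weakN} and \eqref{eq:Nsolution}), so raising it is a fair observation rather than a deviation from the paper's approach.
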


\section{Numerical examples}
In this section we illustrate two numerical examples. First, consider the following Dirichlet
problem:
\begin{eqnarray*}
    -\Delta u=f, \quad \O, \\
    u=0,\quad \Gamma,
\end{eqnarray*}
where $\O=(0,1)^2$. The source term $f$ is calculated from the the exact
solution $$u(x,y)=\sin(2\pi x)\sin(2\pi y)(x^3-y^4+x^2y^3).$$
Table \ref{table:dirichlet} shows the numerical results, where the error reduction
ratios in $L_2(\O)$ and broken energy norm are optimal.

\begin{table}[hbt]
\begin{center}
\begin{tabular}{|r|c|c|c|c|c|}\hline\hline
h & DOFs& $\|u-u_h \|_0$ & ratio &  $\|u-u_h \|_h$ & ratio  \\ \hline
${1}\slash{2}$ & 9 &  0.148  & - &    1.759   & -  \\
${1}\slash{4}$ & 57 &  1.200E-002 & 3.62 & 0.300   & 2.55  \\
${1}\slash{8}$ & 273 &  4.690E-004     & 4.68 &  3.051E-002  &3.30  \\
${1}\slash{16}$ & 1185 & 2.292E-005   & 4.35  &   3.355E-003  &3.19  \\
${1}\slash{32}$ & 4929 & 1.279E-006 & 4.16 &  3.940E-004   &3.09  \\
${1}\slash{64}$ & 20097 & 7.590E-008 & 4.08  &  4.78E-005  &3.04  \\
${1}\slash{128}$ & 81153 &   4.629E-009    & 4.04 & 5.881E-006 &3.02 \\
\hline\hline
\end{tabular}
\end{center}
\caption{\label{table:dirichlet} The Dirichlet problem: The apparent $L^2$
  and broken energy norm errors and their reduction ratios on the
  quadrilateral meshes.}
\end{table}

Next, turn to the following Neumann problem:
\begin{eqnarray*}
    -\Delta u+u=f,\quad \O,\\
    \frac{\partial u}{\partial n}=g,\quad \Gamma,
\end{eqnarray*}
with the same domain $\O=(0,1)^2$. The source terms $f$ and $g$ are
are generated from the exact solution
\begin{eqnarray*}
    u(x,y)=\cos(2\pi x)\cos(2\pi y)(x^3-y^4+x^2y^3).
\end{eqnarray*}
Again, Table \ref{table:neumann} shows the numerical
results, where the error reduction ratios in $L_2(\O)$ and broken energy norm are optimal.

\begin{table}[hbt]
\begin{center}
\begin{tabular}{|r|c|c|c|c|c|}\hline\hline
h & DOFs& $\|u-u_h \|_0$ & ratio &  $\|u-u_h \|_h$ & ratio  \\ \hline
${1}\slash{2}$ & 32 &  3.850E-002   & - &    0.698   & -  \\
${1}\slash{4}$ & 104 &  5.217E-003 & 2.88 & 0.172   & 2.02  \\
${1}\slash{8}$ & 368 &  3.325E-004     & 3.97 &  2.348E-002  &2.88  \\
${1}\slash{16}$ & 1376 & 1.917E-005   & 4.12   &   2.907E-003  &3.01   \\
${1}\slash{32}$ & 5312 & 1.162E-006 & 4.04  &  3.616E-004   &3.01  \\
${1}\slash{64}$ & 20864 & 7.201E-008 & 4.01  &  4.513E-005  &3.00  \\
${1}\slash{128}$ & 82688 &   4.491E-009    & 4.00 & 5.639E-006 &3.00 \\
\hline\hline
\end{tabular}
\end{center}
\caption{\label{table:neumann} The Neumann problem: The apparent $L^2$-
  and broken energy norm errors and their reduction ratios on the
  quadrilateral meshes.}
\end{table}

\bibliographystyle{plain}

\end{document}